\newcommand{\stkout}[1]{\ifmmode\text{\sout{\ensuremath{#1}}}\else\sout{#1}\fi}
\newcommand{\R}{\mathbb R}
\newcommand{\bfe}{\ensuremath{\mathbf{e}}}
\newcommand{\bff}{\ensuremath{\mathbf{f}}}
\newcommand{\bfw}{\ensuremath{\mathbf{w}}}
\newcommand{\bfx}{\ensuremath{\mathbf{x}}}
\newcommand{\bfu}{\ensuremath{\mathbf{u}}}
\newcommand{\bfv}{\ensuremath{\mathbf{v}}}
\newcommand{\bfX}{\ensuremath{\mathbf{X}}}
\newcommand{\bfH}{\ensuremath{\mathbf{H}}}
\newcommand{\bfL}{\ensuremath{\mathbf{L}}}
\newcommand{\bfV}{\ensuremath{\mathbf{V}}}
\newcommand{\bfphi}{\ensuremath{\boldsymbol{\phi}}}
\newcommand{\bvphi}{\ensuremath{\boldsymbol{\varphi}}}
\newcommand{\bfeta}{\ensuremath{\boldsymbol{\eta}}}
\newcommand{\baru}{\ensuremath{\Bar{\bfu}}}
\newcommand{\ur}{\ensuremath{\mathbf{u}_r}}
\newcommand{\vr}{\ensuremath{\mathbf{v}_r}}
\renewcommand{\wr}{\ensuremath{\mathbf{w}_r}}
\newcommand{\phir}{\ensuremath{\bfphi_r}}
\newcommand{\Rey}{\ensuremath{\mathrm{Re}}}
\newcommand{\ohm}{\Omega}
\newcommand{\inv}{^{-1}}
\newcommand{\delt}{\ensuremath{\Delta t}}
\newcommand{\divergence}{\nabla \cdot}
\newcommand{\Grad}{\ensuremath{\nabla}}
\newcommand{\omint}{\int_{\Omega}}
\newcommand{\pare}[1]{\left({}#1\right)}
\newcommand{\curly}[1]{\left\{{}#1\right\}}
\newcommand{\abs}[1]{\ensuremath{\left\lvert{#1}\right\rvert}}
\newcommand{\norm}[1]{\ensuremath{\left\|{#1}\right\|}}
\newcommand{\trinorm}[1]{{\left\vert\kern-0.15ex\left\vert\kern-0.15ex\left\vert #1 
    \right\vert\kern-0.15ex\right\vert\kern-0.15ex\right\vert}}
\newcommand{\be}{\begin{equation}}
\newcommand{\ee}{\end{equation}}
\newcommand{\bes}{\begin{equation*}}
\newcommand{\ees}{\end{equation*}}
\def\al#1\eal{\begin{align}#1\end{align}}
\def\als#1\eals{\begin{align*}#1\end{align*}}
\newcommand{\ti}[1]{{ \color{blue}{[TI - #1]}}}
\newcommand{\jr}[1]{{ \color{teal}{[JR - #1]}}}
\newcommand{\pt}[1]{{ \color{violet}{[PHT - #1]}}}
\def\bu{{\bf u}}
\def\uur{{\underline {u}}_r} 
\def\buur{{\underline {\overline{u}}}_r} 
\def\uf{{\underline {f}}_r} 
\def\cN{{\cal N}} 
\def\bX{{\bf X}}
\def\bx{{\bf x}}
\def\bv{{\bf v}}
\newcommand{\bphi}{\boldsymbol{\varphi}}
\def\mO{\mathcal{O}}  
\def\Ll2{\Lambda^r_{L^2}}  
\def\Lh10{\Lambda^r_{H^1_0}}  
\def\sqrtLl2h10{\sqrt{\Ll2\Lh10}}  
\def\epl2{\varepsilon_{L^2}}
\def\eph10{\varepsilon_{H^1_0}}
\def\epavgh10{\varepsilon^{avg}_{H^1_0}}
\def\efchi{\chi_{\text{effective}}}
\def\thechi{\chi_{\text{theory}}}
\def\enedelta{\delta_{\text{energy}}}
\newtheorem{theorem}{Theorem}[section]
\newtheorem{corollary}{Corollary}[theorem]
\newtheorem{lem}{Lemma}[section]
\newtheorem{lemma}{Lemma}[section]
\newtheorem{remark}{Remark}[section]
\newtheorem{definition}{Definition}[section]
\newtheorem{assumption}[lem]{Assumption}
\journal{ ArXiv }
\begin{document}

\begin{frontmatter}

\title{
{\it A Priori} Error Bounds and Parameter Scalings for
the \\ Time Relaxation 
Reduced Order Model}


\author[VT]{Jorge Reyes\corref{cor1}}
\author[VT]{Ping-Hsuan Tsai}
\author[UAM]{Julia Novo}
\author[VT]{Traian Iliescu}

\cortext[cor1]{Corresponding author}

\affiliation[VT]{organization={Department of Mathematics},
            addressline ={ Virginia Tech}, 
            city={ Blacksburg},
            postcode={24060}, 
            state={VA},
            country={USA}}

\affiliation[UAM]{organization={Department of Mathematics},
            addressline ={Universidad Autonoma de Madrid}, 
            city={ Madrid},
            postcode={24060}, 
            state={VA},
            country={Spain}}

\begin{abstract}
The {\it a priori} error analysis of reduced order models (ROMs) for fluids is relatively scarce. In this paper, we take a step in this direction and conduct numerical analysis of the recently introduced time relaxation ROM (TR-ROM), which uses spatial filtering to stabilize ROMs for convection-dominated flows. Specifically, we prove stability, an {\it a priori} error bound, and parameter scalings for the TR-ROM. Our numerical investigation shows that the theoretical convergence rate and the parameter scalings with respect to ROM dimension and filter radius are recovered numerically. In addition, the parameter scaling can be used to extrapolate the time relaxation parameter to other ROM dimensions and filter radii. Moreover, the parameter scaling with respect to filter radius is also observed in the predictive regime.
\end{abstract}

\begin{keyword}
Reduced order model\sep 
Stabilization\sep Spatial filter\sep Time relaxation
\MSC[2020] 65M12 \sep 65M15 \sep 65M60 \sep 65M70 \sep 76D05 \sep 	76F99 
\end{keyword}

\end{frontmatter}

\section{Introduction}


The incompressible Navier-Stokes equations (NSE) are 
\begin{align}
\bfu_t+ (\bfu \cdot \nabla) \bfu -\nu \Delta \bfu+\nabla p &=\bff,\label{eq:Strong_NSE1}\\
\divergence \bfu&=0,\label{eq:Strong_NSE2}
\end{align}
where $\bfu$ 
and $p$ 
are the velocity and pressure fields, respectively, defined on the spatial domain, $\ohm$, and the time interval, $[0,T]$. $\bff$ is an external force, and $\nu$ 
is the inverse of the Reynolds number. 
Appropriate boundary and initial conditions are needed to close the system. 

Fluid flows 
at high Reynolds numbers exhibit 
a wide range of spatial and temporal scales 
that make their direct numerical simulation (DNS) 
often impractical \cite{BIL06,layton2012approximate}. 
This leads to 
the need of alternative computational approaches, 
such as large eddy simulations (LES), Reynolds-averaged Navier–Stokes equations (RANS), and numerical regularizations. 
One type of regularization is the 
time relaxation model (TRM) 
\cite{stolz2001approximate,stolz2001shocks}, which leverages spatial filtering to increase the numerical stability. The TRM 
for a domain $ \ohm \subset \R^d$, $ d= 2 \text{ or } 3$, and for $ t >0 $ 
is given as 
\al
\bfu_t+ (\bfu \cdot \nabla) \bfu-\nu \Delta \bfu + \chi\bfu^* +\nabla p &=\bff,\label{eq:Strong_TRM1}\\
\divergence \bfu&=0,\label{eq:Strong_TRM2}
\eal
where the dimensionless parameter $ \chi $ is called the \emph{time relaxation parameter}, which is often manually tuned to 
adjust the numerical stabilization, and $ \bfu^* $ is a regularization term defined 
in Section \ref{sec: Notation}.
The goal of $ \bfu^* $ is to drive the unresolved fluctuations of $ \bfu$ down to $ 0$.
TRM has been investigated in 
~\cite{belding2022efficient,belding2020computational,dunca2014numerical,ervin2007numerical,layton2007truncation,neda2012increasing,takhirov2018modular}
and has been used in various applications~\cite{neda2016finite,takhirov2016time}.
A TRM 
review can be found in \cite{breckling2017review}. 

Although the DNS computational cost is significantly reduced by LES, RANS, and numerical stabilization, these  approaches remain computationally prohibitive in decision-making applications where multiple forward simulations are needed.
In those cases, reduced order models (ROMs) represent efficient alternatives.  
ROMs are computational models 
whose dimension is orders of magnitude lower
than the dimension of 
full order models (FOMs), i.e.,  models obtained from
classical numerical discretizations.  
In the numerical simulation of fluid flows, Galerkin ROMs
(G-ROMs), which use data-driven basis functions in a Galerkin framework, have provided
efficient and accurate approximations of laminar flows, such as the
two-dimensional flow past a circular cylinder at low Reynolds
numbers~\cite{
hesthaven2015certified,
quarteroni2015reduced}.
However, turbulent flows are notoriously hard for the
standard G-ROM. 
Indeed, to capture the complex dynamics, 
a large number 
\cite{tsai2023accelerating} of ROM basis functions is required, 
which yields high-dimensional ROMs that cannot be used in realistic applications.
Thus, computationally efficient, low-dimensional ROMs are used instead.
Unfortunately, these ROMs are inaccurate since 
the ROM basis functions that were not used 
to build the G-ROM 
have an important role 
in dissipating the energy from the system~\cite{ahmed2021closures}.
Indeed, without 
enough dissipation, 
the low-dimensional G-ROM generally yields spurious numerical oscillations.  
Thus, closures and stabilization strategies are required for the low-dimensional G-ROMs to be stable and accurate~\cite{ahmed2021closures,
fick2018stabilized,kaneko2022augmented,kaneko2020towards,mou2023energy,parish2024residual}. 

FOM 
stabilizations and closures 
are supported by thorough 
numerical analysis, particularly when applied alongside traditional methods like the finite element method (FEM) or SEM~\cite{BIL06,john2004large,rebollo2014mathematical,roos2008robust}. 
These references address both fundamental numerical analysis issues, such as stability and convergence, and practical challenges, like determining appropriate parameter scalings for stabilization coefficients. These two aspects are closely linked, as insights from numerical analysis guide the selection of parameter scalings, 
which inform practical decisions. 
We emphasize, however, that despite growing interest in ROM closures 
and stabilizations, 
their comprehensive mathematical and numerical analysis remains an open challenge. 
Indeed, while some strides have been made in analyzing 
ROM closures and stabilizations~\cite{giere2015supg,novo2021error,gunzburger2020leray,iliescu2013variational,iliescu2014variational,john2022error,xie2018numerical}, much work is needed to reach the rigor of FOM analysis.

In this paper, we take a step 
in this direction by establishing the first rigorous numerical analysis results, including stability and \textit{a priori} bounds, for the time relaxation reduced order model (TR-ROM), 
which was successfully used in \cite{tsai2023time} in numerical simulations of turbulent channel flow. 
Crucially, we also derive parameter scalings that ensure ROM parameters automatically adjust with changes in the corresponding FOM and ROM parameters, eliminating the need for manual tuning often required in existing data-driven ROMs.

This article is organized as follows: 
In Section \ref{sec: Notation}, we give preliminaries about the SEM, G-ROM, and ROM filtering.
In Section \ref{sec: main}, we present the TR-ROM, prove its unconditional stability and an {\it a priori} error bound, and derive novel scalings for the time relaxation parameter.
In Section \ref{sec: Num_Sims}, we show that the theoretical convergence rates and parameter scalings with respect to ROM dimension and filter radius are numerically recovered for the 2D flow past a cylinder and 2D lid-driven cavity.
In Section \ref{sec: con}, we present the
conclusions of our theoretical and numerical investigations.

\section{Notations and Preliminaries}
\label{sec: Notation}
\subsection{Spectral Element Method}
\label{ssec: Sem Intro} 


This paper will use the following spaces: $L^p(\Omega) $, $W^{k,p}(\Omega) $, and $ H^k(\Omega) = W^{k,2}(\Omega)$, where $ k \in \mathbb{N}, 1 \leq p \leq \infty $ for domain $\Omega$.
 The $L^2(\Omega)$ norm is denoted as $\Vert \cdot \Vert$, with the corresponding inner product $(\cdot,\cdot)$. 
 Vector-valued functions are indicated in boldface having $d$ components ($d=2$ or $3$).
 The $H^k(\ohm)$ norm will be denoted by $ \norm{\cdot}_k$, with all other norms clearly denoted. For the continuous vector function $ \bfu(\bfx,t)$ defined on the entire time interval $ (0,T)$, we have
\begin{equation*}
 \norm{\bfu}_{\infty,k} := \sup_{0<t<T} \norm{\bfu(\cdot,t)}_k, \qquad \text{ and } \qquad \norm{\bfu}_{m,k} := \pare{\int_0^T \norm{\bfu(\cdot,t)}_k^m \, dt}^{\frac{1}{m}}.   
\end{equation*}
 The solutions are sought in the following functional spaces:
\begin{eqnarray*}
    & &\mathrm{Velocity\;space} -\bX:= \bfH_0^1(\Omega) = \curly{ \bfu \in \bfH_0^1(\ohm): \bfu\mid_{\partial\ohm} =0 }\, , \\
    & &\mathrm{Pressure\;space} - Q:=L^{2}_{0}(\Omega)=\left\{q \in L^2(\Omega) :
     \int_\Omega \,  q \, d\Omega = 0 
     \right\} \, ,\\
    & &\mathrm{Divergence-free\;space} - \bfV:=\left\{\bfv \in \bfX: \int_\Omega q \, \Grad\cdot
    \bfv  \, d\Omega \, = \, 0, \; \forall  q \in Q \right\}.
\end{eqnarray*}
Boldface indicates that the space is spanned by vector-valued functions. The dual space of $\bX$ is denoted as $\bfX'$, and the norm of the space is $\| \cdot \|_{-1} $.
Moreover, we define $ \bfV $ to be the weakly divergence-free subspace of $\bX$.

The FOM is based on the SEM in the open-source code Nek5000 \cite{fischer2008nek5000}, and uses the
$P_{N}$--$P_{N-2}$ 
velocity-pressure coupling \cite{maday1989spectral}. 
To this end, let $\Omega $ be a polygonal domain and $\left\{\Omega_i\right\}_{i=1}^K$ be a conforming partition
of $\Omega$ into rectangles or rectangular parallelepipeds. If $\bfu$ is a function defined in
$\overline{\Omega}= \cup_{i=1}^K \overline{\Omega}_i$, the
restriction of $\bfu$ to $\overline{\Omega}_i$ will be denoted by
$\bu_i$. We set
$$
\bfX_N=\left\{\bfv_N\in (C^0(\overline{\Omega}))^d\mid (\bfv_N)_i\in
({\mathbb P}_N(\overline{\Omega}_i))^d,\ (\bfv_N)_i=0 \ {\rm in}\
\partial\Omega\cap\partial\Omega_i,\ i=1,\ldots,K\right\},
$$
where ${\mathbb P}_N$ denotes the space of polynomials of degree less or equal to $N$ with
respect to each variable. 
 The discrete space of pressures $Q_N \subset Q$ is defined as
$$
Q_N=\left\{q_N\in L_0^2(\Omega)\mid (q_N)_i \in {\mathbb
P}_{N-2}(\overline{\Omega}_i), \ i=1,\ldots,K\right\}.
$$
The discrete velocity belongs to the space
$$
\bfV_N =\left\{\bfv_N\in \bfX_N \mid  (\nabla\cdot
\bfv_N,q_N)=0 \quad \forall q_N\in Q_N\right\}.
$$
With the above choice for the space of pressures, the following
$\inf$-$\sup$ condition is satisfied  \cite{be_et_al,ber_ma},
\begin{equation}
\inf_{q_N\in Q_N} \sup_{\bfv_N\in \bfX_N}{{\left(q_N,\nabla\cdot
\bv_N\right)}\over{\| q_N\| \| \nabla \bfv_N\|}}\ge {\beta}N^{\frac{1-d}{2}},
\label{inf-sup}
\end{equation}
where $\beta$ is a constant that does not depend on $N$.

Let $\delt$ denote the time step, 
and $t^{n} = n \delt$, $n = 0, 1, \dots, M$, the time instances. 
We also use the notation $ \bfu(t^n) = \bfu^n$
and the following discrete norms:  
\begin{eqnarray*}
 \trinorm{ \bfu }_{\infty, k} := \max_{0 \le n \le M} \|\bfu^{n}\|_{k} \, ,
   \hspace{1 cm}
    \trinorm{ \bfu }_{m,k} := \left(\delt\sum_{n=0}^{M} \| \bfu^{n} \|^{m}_{k}\right)^{1/m} .
\end{eqnarray*}
For $ \bfu,\bfv,\bfw \in \bX $, we define the trilinear forms $ b, b^*: \bX \times \bX \times \bX \mapsto \R  $ 
as follows: 
\begin{eqnarray}
 b(\bfu,\bfv,\bfw) &=& (\bfu\cdot\nabla \bfv,\bfw), \label{eq: B_term} \\
 b^{*}(\bfu,\bfv,\bfw)&=& (\bfu \cdot \Grad\bfv ,\bfw) + \frac{1}{2}((\Grad\cdot \bfu)\bfv,\bfw) = \frac{1}{2}(b(\bfu,\bfv,\bfw)-b(\bfu,\bfw,\bfv)). \label{eq: Bstar_term}
 \end{eqnarray}
The following approximation properties hold \cite{be_et_al,ber_ma}:
\al
    \inf_{\bfv_N\in \bX_{N}}\| \bfu - \bfv_N \| &\le C N^{-k-1} \| \bfu \|_{{k+1}},\;\; \bfu \in
    \bfH^{k+1}(\Omega)^{d}, \label{prop1} \\
    \inf_{\bfv_N\in \bX_{N}}\|\Grad( \bfu - \bfv_N) \| &\le C N^{-k}\| \bfu \|_{{k+1}},\;\; \bfu \in
    \bfH^{k+1}(\Omega)^{d}, \label{prop2} \\
    \inf_{q_N \in Q_{N}} \| p - q_N \| &\le CN^{-s-1} \| p \|_{s+1},\;\;  p \in
    H^{s+1}(\Omega). \label{prop3}
\eal
We also use the following lemmas:\begin{lemma}\label{TRIL} \cite{ Layton08, temam2001navier},
For $\bfu, \bfv, \bfw \in \bX$, 
$b^{*}(\bfu, \bfv, \bfw)$ can be bounded 
as follows: 
\al
 b^{*}(\bfu, \bfv, \bfw) &\leq   C \norm{\bfu}^{\frac{1}{2}} \norm{\Grad \bfu}^{\frac{1}{2}} \left\|\Grad \bfv\right\|  \left\|\Grad \bfw\right\|  , \\
 b^{*}(\bfu, \bfv, \bfw) &\leq  C \left\|\Grad \bfu\right\| \left\|\Grad \bfv\right\|  \left\|\Grad \bfw\right\| ,\\
 b^*(\bfu,\bfv,\bfw) &\leq C \norm{\Grad \bfu} \norm{\Grad \bfv} \norm{\bfw}^{1/2} \norm{\Grad \bfw}^{1/2}.\label{eq: bbound}
\eal

\end{lemma}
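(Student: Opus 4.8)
The plan is to reduce each of the three estimates to Hölder's inequality combined with the Ladyzhenskaya (Gagliardo--Nirenberg) interpolation inequality $\norm{\bfphi}_{L^4}\le C\norm{\bfphi}^{1/2}\norm{\Grad\bfphi}^{1/2}$ (valid in two dimensions) and the Sobolev embedding together with Poincaré's inequality, $\norm{\bfphi}_{L^4}\le C\norm{\Grad\bfphi}$ for $\bfphi\in\bX$. The only real decision is which of the two equivalent representations of $b^*$ in \eqref{eq: Bstar_term} to start from, and onto which argument the interpolation ``square-root'' factor is routed.

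For the first bound I would start from the skew-symmetric form $b^*(\bfu,\bfv,\bfw)=\tfrac12\big(b(\bfu,\bfv,\bfw)-b(\bfu,\bfw,\bfv)\big)$, since it contains no divergence term. Applying Hölder with exponents $(4,2,4)$ to each trilinear term, e.g. $b(\bfu,\bfv,\bfw)=(\bfu\cdot\Grad\bfv,\bfw)\le\norm{\bfu}_{L^4}\norm{\Grad\bfv}\norm{\bfw}_{L^4}$, I then bound $\bfu$ by Ladyzhenskaya and the undifferentiated factor ($\bfw$ in the first piece, $\bfv$ in the second) by Sobolev--Poincaré; both pieces yield $C\norm{\bfu}^{1/2}\norm{\Grad\bfu}^{1/2}\norm{\Grad\bfv}\norm{\Grad\bfw}$. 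The second bound then follows immediately from the first by Poincaré's inequality, $\norm{\bfu}^{1/2}\norm{\Grad\bfu}^{1/2}\le C\norm{\Grad\bfu}$.

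For the third bound the interpolated factor must instead land on $\bfw$, so I would keep the original definition $b^*(\bfu,\bfv,\bfw)=(\bfu\cdot\Grad\bfv,\bfw)+\tfrac12((\Grad\cdot\bfu)\bfv,\bfw)$, in which $\bfw$ carries no derivative in either term. Hölder then gives $(\bfu\cdot\Grad\bfv,\bfw)\le\norm{\bfu}_{L^4}\norm{\Grad\bfv}\norm{\bfw}_{L^4}$ and $((\Grad\cdot\bfu)\bfv,\bfw)\le\norm{\Grad\bfu}\norm{\bfv}_{L^4}\norm{\bfw}_{L^4}$; bounding $\bfu$, $\bfv$, and $\Grad\cdot\bfu$ by Sobolev--Poincaré and applying Ladyzhenskaya to $\bfw$ produces the factor $\norm{\bfw}^{1/2}\norm{\Grad\bfw}^{1/2}$ in both terms, giving $C\norm{\Grad\bfu}\norm{\Grad\bfv}\norm{\bfw}^{1/2}\norm{\Grad\bfw}^{1/2}$.

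The main subtlety, and the reason two representations are needed, is that the interpolation factor cannot be placed on a differentiated argument: had I used the skew-symmetric form for the third bound, the term $b(\bfu,\bfw,\bfv)=(\bfu\cdot\Grad\bfw,\bfv)$ would leave a full gradient $\norm{\Grad\bfw}$ rather than the desired $\norm{\bfw}^{1/2}\norm{\Grad\bfw}^{1/2}$, while conversely the divergence form would only yield a full $\norm{\Grad\bfu}$ in the first bound. Matching the target exponents therefore amounts to selecting the representation that keeps the argument to be interpolated underivative. I also note that the $1/2$--$1/2$ interpolation is the two-dimensional Ladyzhenskaya inequality; in three dimensions the exponents and embeddings change, and the estimates then follow in the form recorded in \cite{Layton08,temam2001navier}.
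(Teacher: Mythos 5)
The paper itself does not prove Lemma~\ref{TRIL}; it simply cites \cite{Layton08,temam2001navier}, so your proposal supplies an argument where the paper offers none. Your argument is correct and is essentially the standard one for $d=2$: the choice of representation (skew-symmetric form when the interpolated factor is $\bfu$, divergence form when it is $\bfw$) is exactly the right organizing principle, and each step (H\"older with exponents $(4,2,4)$, Ladyzhenskaya on the undifferentiated argument to be interpolated, $H^1_0\hookrightarrow L^4$ plus Poincar\'e on the other undifferentiated argument) is valid. The one thing I would correct is your closing remark: the estimates do \emph{not} change form in three dimensions. The same $\norm{\cdot}^{1/2}\norm{\Grad\cdot}^{1/2}$ bounds hold for $d=3$, which is how they appear in the cited references, and this matters here because the paper's setting explicitly allows $d=2$ or $3$. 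The fix is to replace the $(4,2,4)$ H\"older split by $(3,2,6)$ and use the interpolation $\norm{\bfphi}_{L^3}\le C\norm{\bfphi}^{1/2}\norm{\bfphi}_{L^6}^{1/2}$ together with the embedding $H^1_0\hookrightarrow L^6$; the undifferentiated factor that is not interpolated then lands in $L^6$ and is again controlled by $\norm{\Grad\bfphi}$. With that substitution your routing of the square-root factor goes through verbatim in both dimensions, and the proof covers the full generality of the lemma as stated.
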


\begin{lemma}[Discrete Gronwall Lemma \cite{heywood1990finite}]
\label{discreteGronwall} Let $\Delta t$, H, and $a_{n},b_{n},c_{n},d_{n}$
(for integers $n \ge 0$) be finite nonnegative numbers such that
\begin{equation*}
a_{l}+\Delta t \sum_{n=0}^{l} b_{n} \le \Delta t \sum_{n=0}^{l} d_{n}a_{n} +
\Delta t\sum_{n=0}^{l}c_{n} + H \ \ for \ \ l\ge 0. \label{gronwall1}
\end{equation*}
Suppose that $\Delta t d_n < 1 \; \forall n$. Then,
\begin{equation*}
a_{l}+ \Delta t\sum_{n=0}^{l}b_{n} \le \exp\left( \Delta t\sum_{n=0}^{l} \frac{d_{n}}{1 - \Delta t d_n } \right) \left( \Delta t\sum_{n=0}^{l}c_{n} + H
\right)\ \ for \ \ l \ge 0.
\end{equation*}
\end{lemma}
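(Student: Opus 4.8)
The plan is to prove the lemma by a telescoping/induction argument, whose crux is to exploit the hypothesis $\Delta t\, d_n < 1$ in order to resolve the implicit appearance of $a_l$ on both sides of the assumed inequality. First I would introduce two abbreviations: the nondecreasing ``data'' $\Sigma_l := \Delta t \sum_{n=0}^{l} c_n + H$ and the partial history $S_l := \Delta t \sum_{n=0}^{l-1} d_n a_n + \Sigma_l$, with the convention that the empty sum is zero, so that $S_0 = \Sigma_0$. The purpose of these definitions is the identity $\Delta t\sum_{n=0}^{l} d_n a_n + \Sigma_l = S_l + \Delta t\, d_l\, a_l$, which isolates the ``diagonal'' term $\Delta t\, d_l\, a_l$ from the past history contained in $S_l$.

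The key algebraic step is as follows. Since every $b_n \ge 0$, the hypothesis implies $a_l \le \Delta t\sum_{n=0}^{l} d_n a_n + \Sigma_l = S_l + \Delta t\, d_l\, a_l$, that is, $(1-\Delta t\, d_l)\,a_l \le S_l$. Here the assumption $\Delta t\, d_l < 1$ is essential: it makes the coefficient $1-\Delta t\, d_l$ strictly positive, so that, setting $g_l := \Delta t\, d_l/(1-\Delta t\, d_l)\ge 0$, we may solve for the diagonal term to obtain $\Delta t\, d_l\, a_l \le g_l\, S_l$. Substituting this back into $S_{l+1} = S_l + \Delta t\, d_l\, a_l + (\Sigma_{l+1}-\Sigma_l)$ yields the clean scalar recursion $S_{l+1} \le (1+g_l)\,S_l + (\Sigma_{l+1}-\Sigma_l)$.

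Next I would unwind this recursion. Iterating it and collecting, in front of each data increment $\Sigma_k-\Sigma_{k-1}$, the product of the factors $(1+g_j)$ accumulated thereafter, then bounding every such partial product by the full product $\prod_{j=0}^{l-1}(1+g_j)$, the increments telescope back to $\Sigma_l$. Applying the elementary inequality $1+x\le e^{x}$ converts the product into $E_l := \exp\big(\sum_{n=0}^{l} g_n\big) = \exp\big(\Delta t\sum_{n=0}^{l} \tfrac{d_n}{1-\Delta t\, d_n}\big)$, giving $S_l \le E_{l-1}\,\Sigma_l$. Finally, to recover the $b_n$ terms on the left for free, I return to the original inequality and bound its right-hand side directly: $a_l + \Delta t\sum_{n=0}^{l} b_n \le S_l + \Delta t\, d_l\, a_l \le (1+g_l)\,S_l \le E_{l-1}\,e^{g_l}\,\Sigma_l = E_l\,\Sigma_l$, which is exactly the claimed bound.

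The main obstacle is the implicit, self-referential structure of the inequality: because $a_l$ sits inside the sum $\Delta t\sum_{n=0}^{l} d_n a_n$ on the right, a naive Gronwall iteration cannot be applied, and it is precisely the step $(1-\Delta t\, d_l)\,a_l \le S_l$ — legitimate only under $\Delta t\, d_l < 1$ — that unlocks the argument. The secondary technical care lies in the product-to-exponential conversion via $1+x\le e^{x}$ combined with the telescoping of the data increments; I expect the bookkeeping of which accumulated product multiplies which increment to be the fiddliest part, but it becomes entirely routine once the scalar recursion for $S_l$ is in hand.
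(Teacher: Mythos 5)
Your argument is correct: isolating the diagonal term via $(1-\Delta t\,d_l)a_l \le S_l$ (legitimate precisely because $\Delta t\,d_l<1$), deriving the scalar recursion for $S_l$, and converting the product $\prod(1+g_j)$ to an exponential with $1+x\le e^x$ yields exactly the stated bound, including the correct denominator $1-\Delta t\,d_n$ inside the exponential. The paper does not prove this lemma itself --- it is quoted directly from Heywood and Rannacher --- and your proof is essentially the standard argument given there, so there is nothing to reconcile.
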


Let $\bfu$ be the velocity of the Navier-Stokes equations for a given initial condition, and let $\bfu_N$ be its continuous
in time spectral element approximation. Then, the following bound for the error is proved in \cite{frutos_novo}  (see Remarks 4.2 and 4.3): 
$$
\|\bfu(t)-\bfu_N(t)\|+(N)^{-1}\|\nabla(\bfu(t)-\bfu_N(t))\|\le C N^{-(k+1)},\ t\in(0,T],
$$
assuming $\bfu(t)\in \bfH^{k+1}(\Omega)$ 
($d=2$).

Using standard techniques, one can also prove error bounds for the fully discrete method. 
In particular, we will make the following assumption:  
\begin{assumption}[{Spectral Element Error}]\label{as: FEMassumption}
    Under sufficient regularity of the true solution, we assume that the fully discrete approximation of \eqref{eq:Strong_NSE1}-\eqref{eq:Strong_NSE2} using SEM in space and [BDF3/EXT3] in time, $ \bfu_N^n \in \bX_N$ for $ 1\le n\le M$, satisfies the following asymptotic error estimate:
    \begin{equation*}
        \norm{\bfu^{n} - \bfu_N^n}^2 \leq C(N^{-{2k-2}}+ \delt^6),\qquad    
        \norm{\Grad\pare{\bfu^n - \bfu_N^n}}^2 \leq C(N^{-{2k}} + \delt^6).
    \end{equation*}
\end{assumption}
\begin{remark}
    As discussed in \cite{fischer2017recent}, $k=3$ is used to
    ensure that the imaginary eigenvalues associated with skew-symmetric advection
    operator are  within the stability region of the BDF$3$/EXT$3$ time-stepper. 
\end{remark}

\subsection{Galerkin 
Reduced Order Model (G-ROM)}

In this section, we introduce the G-ROM
. We follow the standard proper
orthogonal decomposition (POD) procedure
\cite{berkooz1993proper,volkwein2013proper} to construct the reduced basis function.  To
this end, we collect a set of spectral element (FOM) solutions lifted by the zeroth mode $\bphi_0$. The POD method seeks a low-dimensional basis $\{\bphi_1,\ldots,\bphi_r\}$ in $\bfL^2$ that optimally approximates the snapshots, that is, solves the minimization problem: 
\begin{equation*}
    \min \frac{1}{M+1}\sum^M_{l=0}\norm{\bu{_N}(\cdot,t_l)- \sum^r_{j=1}\left(\bu{_N}(\cdot,t_l),\bphi_j(\cdot)\right)\bphi_j(\cdot)}^2
\end{equation*}
subject to the conditions $(\bphi_i,\bphi_j)= \delta_{ij}$, for $1\le i,j \le r$, where $\delta_{ij}$ is the Kronecker delta. The minimization problem can be solved 
by considering the eigenvalue
problem $\mathcal{K}\underline{z}_j = \lambda_j \underline{z}_j$, for $j=1,\ldots,r$,
where $\mathcal{K} \in \mathbb{R}^{(M+1)\times(M+1)}$ 
is the snapshot Gramian matrix using
the $L^2$ inner product (see, e.g.,~\cite{kaneko2020towards,tsai2022parametric}
for alternative strategies). 

The first $r$ POD basis functions $\{\bphi_i\}^r_{i=1}$ are constructed from the first $r$ eigenmodes of the Gramian matrix. The G-ROM is then constructed by inserting the POD approximated solution 
$
   \bu_r(\bx) = \sum_{j=1}^r u_{r,j} \bphi_j(\bx)
$
into the weak form of the NSE:
{\em Find $\bu_r$ 
such that, for all $\bv_r \in \bX_r$,}
\begin{eqnarray}
    && 
    \left(
        \frac{\partial \bu_r}{\partial t} , \bv_r 
    \right)
    + Re^{-1} \, 
    \left( 
        \nabla \bu_r , 
        \nabla \bv_r 
    \right)
    + \biggl( 
        (\bu_r \cdot \nabla) \bu_r ,
        \bv_r 
    \biggr)
    = 0,  
    \label{eq:gromu}
\end{eqnarray}
where 
$\bX_r := \text{span} \{\bphi_i\}^r_{i=1}$ is the ROM space. 

It can also be shown that the following error formula holds for the $L^2$-POD basis functions \cite{HLB96}:
\al\label{eq: POD_Proj_err}
\frac{1}{M+1} \sum_{\ell =0}^{M} \norm{ \bu_N(\cdot,t_\ell) - \sum_{j=1}^{r} \pare{\bfu_N(\cdot,t_\ell),\bphi_j(\cdot)} \bphi_j(\cdot) }^{2}
= \Ll2 := \sum_{j=r+1}^{R} \lambda_j,
\eal
where $R$ is the rank of the Gramian matrix, $ \mathcal{K}$.
\begin{remark}
    Because the POD basis functions are a linear combination of the snapshots generated from the FOM, the POD basis functions satisfy the boundary conditions of the original PDE and 
    inherit the FOM's divergence-free properties. 
    In this paper, the FOM is based on a SEM discretization, which yields only a weakly divergence-free velocity.
    More precisely, 
    the POD basis functions belong to $ \bfV_N$, giving $ \bX_r \subset \bfV_N$. 
    Thus, to ensure the ROM stability in Lemma~\ref{lemma: Stability}, we equip the ROM with the skew-symmetric trilinear form $b^*$ in \eqref{eq: Bstar_term}.
\end{remark}
Additionally, we make use of the following definitions and lemmas: 
\begin{definition}[ROM $L^2$ Projection]\label{def: Rom_Proj}
    Let $ P_r: \bfL^2 \to \bX_r$ such that, $ \forall~\bfu \in \bfL^2, P_r(\bfu)$ is the unique element of $ \bX_r$ satisfying 
    \begin{equation}\label{eq: ROM_Proj}
        \pare{P_r(\bfu), \vr} = \pare{\bfu,\vr}, ~ \forall~ \vr \in \bX_r.
    \end{equation}
\end{definition}
\begin{lemma}[$H_0^1$ POD Projection error]
\label{Lemma: H1POD_Proj_err}
The POD projection error in the $ H_0^1$ norm satisfies
\al
\label{eq: H10_POD_Proj_err}
\frac{1}{M+1} \sum_{\ell =0}^{M} \norm{ \bfu_N(\cdot,t_\ell) - \sum_{j=1}^{r} \pare{\bfu_N(\cdot,t_\ell),\bvphi_j(\cdot)} \bvphi_j(\cdot) }_{\bfH_0^1}^{2} = \Lh10 := \sum_{j=r+1}^{R} \norm{\bvphi_j}_{\bfH_0^1}^{2} \lambda_j. 
\eal
\end{lemma}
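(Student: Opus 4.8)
The plan is to mirror the derivation of the $L^2$ error formula \eqref{eq: POD_Proj_err}, exploiting the two orthogonality structures that are built into the POD construction: the \emph{spatial} $L^2$-orthonormality of the modes $\bvphi_j$ and the \emph{temporal} orthogonality of the modal coefficients. First I would note that, since $\{\bvphi_1,\dots,\bvphi_R\}$ is an $L^2$-orthonormal basis of the snapshot space $\mathrm{span}\{\bfu_N(\cdot,t_\ell)\}_{\ell=0}^{M}$, each snapshot is reproduced exactly by its full expansion, so that the truncated projection error is simply the discarded tail,
\begin{equation*}
\bfu_N(\cdot,t_\ell) - \sum_{j=1}^{r}\pare{\bfu_N(\cdot,t_\ell),\bvphi_j}\bvphi_j = \sum_{j=r+1}^{R}\pare{\bfu_N(\cdot,t_\ell),\bvphi_j}\bvphi_j .
\end{equation*}

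Next I would expand the squared $H_0^1$ norm of this tail as a double sum,
\begin{equation*}
\norm{\sum_{j=r+1}^{R}\pare{\bfu_N(\cdot,t_\ell),\bvphi_j}\bvphi_j}_{\bfH_0^1}^{2} = \sum_{i,j=r+1}^{R}\pare{\bfu_N(\cdot,t_\ell),\bvphi_i}\pare{\bfu_N(\cdot,t_\ell),\bvphi_j}\pare{\bvphi_i,\bvphi_j}_{\bfH_0^1} .
\end{equation*}
Here lies the main subtlety, and the reason this is \emph{not} a verbatim copy of the $L^2$ argument: the POD modes are $L^2$-orthonormal but \emph{not} $H_0^1$-orthogonal, so the spatial factors $\pare{\bvphi_i,\bvphi_j}_{\bfH_0^1}$ do not vanish for $i\neq j$, and the cross terms cannot be discarded pointwise in time. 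The resolution is to average over the snapshots \emph{before} trying to simplify.

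Summing over $\ell$ and interchanging the (finite) order of summation, I would isolate the temporal factor and invoke the orthogonality of the modal coefficients,
\begin{equation*}
\frac{1}{M+1}\sum_{\ell=0}^{M}\pare{\bfu_N(\cdot,t_\ell),\bvphi_i}\pare{\bfu_N(\cdot,t_\ell),\bvphi_j} = \lambda_i\,\delta_{ij},
\end{equation*}
which follows directly from the eigenvalue problem $\mathcal{K}\underline{z}_j = \lambda_j \underline{z}_j$ together with the representation of $\bvphi_j$ as the normalized snapshot combination weighted by $\underline{z}_j$ (this is the same identity that underlies \eqref{eq: POD_Proj_err}). This collapses the double sum to its diagonal and leaves
\begin{equation*}
\frac{1}{M+1}\sum_{\ell=0}^{M}\norm{\sum_{j=r+1}^{R}\pare{\bfu_N(\cdot,t_\ell),\bvphi_j}\bvphi_j}_{\bfH_0^1}^{2} = \sum_{j=r+1}^{R}\lambda_j\,\norm{\bvphi_j}_{\bfH_0^1}^{2} = \Lh10,
\end{equation*}
as claimed. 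I expect the only genuine work to be recording the temporal orthogonality identity cleanly and being careful that the spatial cross terms survive \emph{inside} the snapshot average but are annihilated only after it; the remaining manipulations are routine bookkeeping.
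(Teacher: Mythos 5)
Your proposal is correct and is essentially the argument the paper relies on: the paper's proof simply cites Lemma 3.2 of the reference \cite{iliescu2014variational} (with the $H_0^1$ inner product in place of the $H^1$ one), and that cited lemma rests on exactly the mechanism you spell out — expanding the truncation tail in the $L^2$-orthonormal POD basis and using the temporal orthogonality $\frac{1}{M+1}\sum_{\ell}\pare{\bfu_N(\cdot,t_\ell),\bvphi_i}\pare{\bfu_N(\cdot,t_\ell),\bvphi_j}=\lambda_i\delta_{ij}$ to kill the spatial cross terms only after averaging over snapshots. Your observation that the modes are not $H_0^1$-orthogonal, so the diagonalization happens in time rather than in space, is precisely the point that makes the identity an equality with the weights $\norm{\bvphi_j}_{\bfH_0^1}^2$.
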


\begin{proof}
     These sharper bounds can be obtained by using the $H_0^1$ inner product and norm instead of the $H^1$ inner product and norm in \cite[Lemma 3.2]{iliescu2014variational}.
    \label{remark:h01-approximation-error}
\end{proof}

We list a POD inverse estimate, which will be used in what follows. Let $ S_r \in \R^{r\times r}$ with $ (S_r)_{ij} = \pare{\Grad \bvphi_{j}, \Grad \bvphi_{i}} $ be the POD stiffness matrix. Let $ \norm{\cdot}_2$ denote the matrix 2-norm. 
Since this is traditional notation, in what follows we will use the notation $\| \cdot \|_2$ both for the $H^2$ norm and for the matrix 2-norm.
It will be clear from the context which norm is used. 
\begin{lemma}[POD Inverse Estimates \cite{xie2018numerical}]
\label{lemma: ROM_inv_estimate}
    For all $ \ur \in \bX_r$, the following POD inverse estimate holds:
    \begin{equation}\label{eq: ROM_inv_ineq}
        \norm{\Grad \ur} \leq \sqrt{\norm{S_r}_2} \norm{\ur}.
    \end{equation}
\end{lemma}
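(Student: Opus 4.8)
The plan is to exploit the $L^2$-orthonormality of the POD basis to reduce both sides of the inequality to algebraic quantities involving the coefficient vector, and then to recognize the gradient norm as a quadratic form governed by $S_r$. First I would write an arbitrary $\ur \in \bX_r$ in the POD basis as $\ur = \sum_{j=1}^r a_j \bvphi_j$, with coefficient vector $\bfa = (a_1,\dots,a_r)^T \in \R^r$. Since the POD modes satisfy $(\bvphi_i,\bvphi_j)=\delta_{ij}$, the $L^2$ norm collapses to the Euclidean norm of the coefficients,
\[
\norm{\ur}^2 = \sum_{i,j=1}^r a_i a_j (\bvphi_i,\bvphi_j) = \sum_{j=1}^r a_j^2 = \norm{\bfa}_2^2.
\]

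Next I would compute the gradient norm in the same way. Expanding bilinearly and using the definition $(S_r)_{ij} = (\Grad\bvphi_j,\Grad\bvphi_i)$ gives
\[
\norm{\Grad\ur}^2 = \sum_{i,j=1}^r a_i a_j (\Grad\bvphi_i,\Grad\bvphi_j) = \bfa^T S_r \bfa,
\]
where I use that $S_r$ is symmetric — which follows immediately from the symmetry of the $L^2$ inner product — so the index ordering in the definition of $S_r$ is irrelevant. Thus the inequality to be proved becomes the purely matrix-theoretic statement $\bfa^T S_r \bfa \le \norm{S_r}_2 \, \norm{\bfa}_2^2$.

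To close the argument I would invoke the Rayleigh quotient bound: because $S_r$ is symmetric and positive semidefinite (it is the Gram matrix of the gradients $\Grad\bvphi_i$), its matrix $2$-norm equals its largest eigenvalue, so $\bfa^T S_r \bfa \le \lambda_{\max}(S_r)\,\norm{\bfa}_2^2 = \norm{S_r}_2\,\norm{\bfa}_2^2$ for every $\bfa$. Combining this with the two displayed identities yields $\norm{\Grad\ur}^2 \le \norm{S_r}_2 \norm{\ur}^2$, and taking square roots gives the claim. There is no genuine obstacle here: the estimate is essentially a restatement of the spectral characterization of the matrix $2$-norm once the $L^2$-orthonormality has been used to identify $\norm{\ur}$ with $\norm{\bfa}_2$. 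The only point requiring a word of care is confirming that $\norm{S_r}_2$ coincides with $\lambda_{\max}(S_r)$, which hinges precisely on the symmetry and positive semidefiniteness of $S_r$.
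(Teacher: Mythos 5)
Your proof is correct and is precisely the standard argument: the paper itself gives no proof of this lemma, deferring instead to Lemma 2 and Remark 2 of the cited reference, where the result is established by exactly the route you take (expand in the $L^2$-orthonormal POD basis, identify $\norm{\ur}$ with the Euclidean norm of the coefficient vector and $\norm{\Grad\ur}^2$ with the quadratic form $\bfa^T S_r \bfa$, then apply the Rayleigh-quotient bound for the symmetric positive semidefinite Gram matrix $S_r$). No gaps; your closing remark that $\norm{S_r}_2 = \lambda_{\max}(S_r)$ hinges on symmetry and positive semidefiniteness is the right point to flag.
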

The inverse estimate \eqref{eq: ROM_inv_ineq} was proved in Lemma 2 and Remark {2} in \cite{KV01}. The scaling of $ \sqrt{\norm{S_r}_2}$ with respect to $r$ was numerically investigated in Remark 3.3 in \cite{iliescu2014are} and in Remark 3.2 in \cite{giere2015supg}. 

\begin{lemma}[$L^2$ Stability of of ROM $L^2$ Projection \cite{xie2018numerical}]
    For all $\bfu \in \bfL^2 $, the ROM projection $ P_r$ satisfies
    \begin{equation}
        \norm{P_r(\bfu)} \leq \norm{\bfu }.
    \end{equation}
\end{lemma}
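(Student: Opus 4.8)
The plan is to exploit the fact that the ROM $L^2$ projection $P_r$ defined in Definition \ref{def: Rom_Proj} is precisely the orthogonal projection onto the finite-dimensional subspace $\bX_r$ with respect to the $L^2$ inner product, and that such orthogonal projections are always nonexpansive. The key observation that makes the argument immediate is that $P_r(\bfu) \in \bX_r$, so $P_r(\bfu)$ is itself an admissible test function in the defining relation \eqref{eq: ROM_Proj}.

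First I would set $\vr = P_r(\bfu)$ in \eqref{eq: ROM_Proj}, which gives
\begin{equation*}
    \pare{P_r(\bfu), P_r(\bfu)} = \pare{\bfu, P_r(\bfu)},
\end{equation*}
that is, $\norm{P_r(\bfu)}^2 = \pare{\bfu, P_r(\bfu)}$. Next I would bound the right-hand side by the Cauchy--Schwarz inequality,
\begin{equation*}
    \pare{\bfu, P_r(\bfu)} \leq \norm{\bfu}\,\norm{P_r(\bfu)},
\end{equation*}
so that $\norm{P_r(\bfu)}^2 \leq \norm{\bfu}\,\norm{P_r(\bfu)}$. Finally, if $\norm{P_r(\bfu)} = 0$ the claimed bound holds trivially; otherwise I would divide both sides by the strictly positive quantity $\norm{P_r(\bfu)}$ to conclude $\norm{P_r(\bfu)} \leq \norm{\bfu}$.

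There is essentially no genuine obstacle in this argument; it is the standard nonexpansiveness of an orthogonal projection onto a closed subspace of a Hilbert space, and the only subtleties worth stating explicitly are minor. One must note that $P_r(\bfu)$ is well-defined and lies in $\bX_r$ (guaranteed by Definition \ref{def: Rom_Proj}), so that it is a legitimate test function, and one must dispatch the degenerate case $P_r(\bfu) = 0$ before dividing. It is worth emphasizing that no properties of the POD basis beyond the definition of the projection are used; in particular, orthonormality of the $\{\bphi_i\}$ is not required to obtain this stability bound.
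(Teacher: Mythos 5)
Your proof is correct and is the standard nonexpansiveness argument for an $L^2$-orthogonal projection (test with $\vr = P_r(\bfu)$, apply Cauchy--Schwarz, divide); the paper itself gives no proof and simply cites \cite{xie2018numerical}, where this same elementary argument is used. No gaps.
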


The following error 
bound is a slightly modified variation of 
\cite[Lemma 3.3]{iliescu2014variational}. This is due to our different 
spectral element error Assumption \ref{as: FEMassumption} and different Lemma \ref{Lemma: H1POD_Proj_err}. 
Furthermore, we assume a third-order in time discretization 
(i.e., BDF3/EXT3) as opposed to the first-order backward Euler method.

\begin{lemma}[Modified Lemma 3.3 in \cite{iliescu2014variational}]\label{lemma: estimationprop}
    For any $ \bfu^n \in \bX$, $ n = 0, 1, \dots , M, $
    its $ L^2 $ projection, $ P_r(\bfu^n)$, satisfies the following error bounds: 
    \begin{gather}
        \frac{1}{M+1}\sum_{n=0}^{M}\norm{\bfu^n - P_r(\bfu^n)}^2 \leq C \pare{ N^{-2k-2} + \delt^6 + \Ll2 },\label{eq: L2_ROM_estimate}\\
        \hspace{-0.41 cm}\frac{1}{M+1}\sum_{n=0}^{M}\norm{\Grad\pare{\bfu^n - P_r(\bfu^n)}}^2 \leq C \pare{ N^{-2k} + \norm{S_r}_2 N^{-2k-2} +  (1 + \norm{S_r}_2 )\delt^6 + \Lh10 }\label{eq: H1_ROM_estimate}.
    \end{gather}
\end{lemma}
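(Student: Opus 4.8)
The plan is to follow the structure of Lemma 3.3 in \cite{iliescu2014variational}, splitting the projection error through the intermediate SEM solution $\bfu_N^n$ and invoking the three supplied ingredients: the spectral element error in Assumption~\ref{as: FEMassumption}, the POD projection identities \eqref{eq: POD_Proj_err} and \eqref{eq: H10_POD_Proj_err}, and the $L^2$-stability of $P_r$ together with the inverse estimate of Lemma~\ref{lemma: ROM_inv_estimate}. The key observation is that, since the POD modes $\{\bvphi_j\}$ are $L^2$-orthonormal, the truncated expansion $\sum_{j=1}^r (\bfu_N^n, \bvphi_j)\bvphi_j$ appearing in \eqref{eq: POD_Proj_err} and \eqref{eq: H10_POD_Proj_err} is exactly $P_r(\bfu_N^n)$. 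Thus those identities directly control $\norm{\bfu_N^n - P_r(\bfu_N^n)}$ and $\norm{\Grad(\bfu_N^n - P_r(\bfu_N^n))}$ in a time-averaged sense. Throughout, $\bfu^n = \bfu(t^n)$ denotes the exact NSE velocity, so that Assumption~\ref{as: FEMassumption} applies to $\bfu^n - \bfu_N^n$.

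For the $L^2$ bound \eqref{eq: L2_ROM_estimate}, I would write $\bfu^n - P_r(\bfu^n) = (\bfu^n - \bfu_N^n) + (\bfu_N^n - P_r(\bfu_N^n)) + P_r(\bfu_N^n - \bfu^n)$, using linearity of $P_r$ on the last term. The triangle inequality together with the $L^2$-stability $\norm{P_r(\bfw)} \le \norm{\bfw}$, which absorbs the third term into $\norm{\bfu^n - \bfu_N^n}$, yields $\norm{\bfu^n - P_r(\bfu^n)} \le 2\norm{\bfu^n - \bfu_N^n} + \norm{\bfu_N^n - P_r(\bfu_N^n)}$. Squaring via $(a+b)^2 \le 2a^2 + 2b^2$ gives a pointwise-in-time bound by $C(\norm{\bfu^n-\bfu_N^n}^2 + \norm{\bfu_N^n - P_r(\bfu_N^n)}^2)$. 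Averaging over $n$, applying Assumption~\ref{as: FEMassumption} to the first contribution and \eqref{eq: POD_Proj_err} to the second, produces \eqref{eq: L2_ROM_estimate}.

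The $H_0^1$ bound \eqref{eq: H1_ROM_estimate} uses the identical three-term splitting, but measured in the gradient norm. The gradient of the first term is controlled by the $H^1$ part of Assumption~\ref{as: FEMassumption}, and the time-average of the gradient of the second term is exactly $\Lh10$ by \eqref{eq: H10_POD_Proj_err}. The main obstacle is the third term $\norm{\Grad P_r(\bfu_N^n-\bfu^n)}$: because $P_r$ is the $L^2$ projection, it is \emph{not} $H_0^1$-stable, so one cannot simply bound this gradient by $\norm{\Grad(\bfu_N^n-\bfu^n)}$. This is precisely where the POD inverse estimate of Lemma~\ref{lemma: ROM_inv_estimate} enters: since $P_r(\bfu_N^n-\bfu^n) \in \bX_r$, we have $\norm{\Grad P_r(\bfu_N^n-\bfu^n)} \le \sqrt{\norm{S_r}_2}\,\norm{P_r(\bfu_N^n-\bfu^n)} \le \sqrt{\norm{S_r}_2}\,\norm{\bfu_N^n-\bfu^n}$, the last step again using $L^2$-stability. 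Squaring produces the factor $\norm{S_r}_2$ multiplying the $L^2$ SEM error $C(N^{-2k-2}+\delt^6)$, which is exactly the source of the $\norm{S_r}_2 N^{-2k-2}$ and $\norm{S_r}_2\delt^6$ terms. Collecting the three averaged contributions gives the stated bound.

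I expect the only genuinely delicate point to be this $H_0^1$ treatment of the $L^2$ projection error: the inverse estimate transfers an $L^2$ rate into an $H_0^1$ rate at the cost of the factor $\sqrt{\norm{S_r}_2}$, and correctly tracking how this factor distributes across the $N^{-2k-2}$ and $\delt^6$ contributions, while the genuinely $H_0^1$ pieces $N^{-2k}$ and $\Lh10$ remain free of it, is what distinguishes \eqref{eq: H1_ROM_estimate} from a naive estimate. Everything else reduces to triangle inequalities, the $L^2$-stability of $P_r$, and term-by-term substitution of the already-established bounds.
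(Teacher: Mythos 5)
Your proof is correct and follows essentially the same route as the paper's: split the projection error through the SEM solution $\bfu_N^n$, control the FOM piece with Assumption~\ref{as: FEMassumption}, the POD truncation piece with \eqref{eq: POD_Proj_err} and \eqref{eq: H10_POD_Proj_err}, and handle the gradient of the residual piece $P_r(\bfu_N^n-\bfu^n)$ via the inverse estimate of Lemma~\ref{lemma: ROM_inv_estimate} combined with the $L^2$-stability of $P_r$, which is exactly where the $\norm{S_r}_2(N^{-2k-2}+\delt^6)$ terms arise. The only cosmetic difference is in the $L^2$ bound, where the paper uses the best-approximation property $\norm{\bfu^n-P_r(\bfu^n)}\leq\norm{\bfu^n-\vr}$ with $\vr=P_r(\bfu_N^n)$ instead of your three-term split with $L^2$-stability; both yield \eqref{eq: L2_ROM_estimate} up to the constant $C$.
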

\begin{proof}
The proof of Lemma \ref{lemma: estimationprop} follows a similar approach to that in \cite[Lemma 3.3]{iliescu2014variational}, but here we use the spectral element Assumption \ref{as: FEMassumption} in place of the finite element error assumption.
\end{proof}

A generalization of Lemma \ref{lemma: estimationprop} is given by Corollary \ref{corr: gen_rom_error}. This allows 
a modularity of the projection error to accommodate different discretizations of the FOM. 
\begin{corollary}\label{corr: gen_rom_error}
    For any $ \bfu^n \in \bfX$, $ n = 0, 1, \dots , M, $
    its $ L^2 $ projection, $ P_r(\bfu^n)\in X_r $,
    satisfies the following error bounds: 
    \al
        \frac{1}{M+1}\sum_{n=0}^{M}&\norm{\bfu^n - P_r(\bfu^n)}^2 \leq C \pare{ \norm{\bfu^n - \bfu^n_{FOM}}^2 + \Ll2 },\label{eq: Gen_L2_ROM_estimate}\\
        \frac{1}{M+1}\sum_{n=0}^{M}&\norm{\Grad\pare{\bfu^n - P_r(\bfu^n)}}^2 \leq C \pare{ \norm{\Grad(\bfu^n - \bfu^n_{FOM}) }^2 + \norm{S_r}_2 \norm{\bfu^n - \bfu^n_{FOM}}^2  + \Lh10 }\label{eq: Gen_H1_ROM_estimate}.
    \eal
    where $ \bfu^n_{FOM}$ is the 
    solution given by the full order model (e.g. FEM, SEM, etc).
\end{corollary}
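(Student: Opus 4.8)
The plan is to mirror the (suppressed) argument for Lemma~\ref{lemma: estimationprop}, but to keep the full order model error abstract rather than invoking the spectral element Assumption~\ref{as: FEMassumption}; the only place the specific SEM bounds entered that proof was in estimating $\norm{\bfu^n - \bfu_N^n}$ and its gradient, so replacing those quantities by the generic terms $\norm{\bfu^n - \bfu^n_{FOM}}$ leaves the rest of the argument intact. The starting point is the best-approximation property of the ROM $L^2$ projection: by Definition~\ref{def: Rom_Proj}, $\pare{\bfu^n - P_r(\bfu^n), \vr} = 0$ for every $\vr \in \bX_r$, so
\[
\norm{\bfu^n - P_r(\bfu^n)}^2 = \pare{\bfu^n - P_r(\bfu^n),\, \bfu^n - \vr},
\]
and Cauchy--Schwarz gives $\norm{\bfu^n - P_r(\bfu^n)} \leq \norm{\bfu^n - \vr}$ for all $\vr \in \bX_r$. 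The key device is to choose the competitor $\vr = P_r(\bfu^n_{FOM})$ and to insert the FOM solution as an intermediate quantity.

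For the $L^2$ bound \eqref{eq: Gen_L2_ROM_estimate}, I would write $\bfu^n - P_r(\bfu^n_{FOM}) = (\bfu^n - \bfu^n_{FOM}) + (\bfu^n_{FOM} - P_r(\bfu^n_{FOM}))$, apply the triangle inequality, square, and average over the snapshots. The first piece contributes the FOM error term, while the second is exactly the $L^2$ POD truncation error of the FOM snapshots, which equals $\Ll2$ by the POD error formula \eqref{eq: POD_Proj_err} (here one uses that the POD truncation coincides with $P_r$). This yields \eqref{eq: Gen_L2_ROM_estimate}.

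The $H^1$ bound \eqref{eq: Gen_H1_ROM_estimate} is the delicate step and is where I expect the main obstacle. I would split the gradient error into three pieces,
\[
\Grad\pare{\bfu^n - P_r(\bfu^n)} = \Grad(\bfu^n - \bfu^n_{FOM}) + \Grad\pare{\bfu^n_{FOM} - P_r(\bfu^n_{FOM})} + \Grad\pare{P_r(\bfu^n_{FOM}) - P_r(\bfu^n)}.
\]
The first term is the FOM gradient error and the second is controlled by $\Lh10$ via Lemma~\ref{Lemma: H1POD_Proj_err}. The difficulty is the third term, a difference of two ROM projections that is small in $L^2$ but must be measured in $H^1$. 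To bridge this gap I would invoke the POD inverse estimate of Lemma~\ref{lemma: ROM_inv_estimate}, obtaining $\norm{\Grad\pare{P_r(\bfu^n_{FOM}) - P_r(\bfu^n)}} \leq \sqrt{\norm{S_r}_2}\,\norm{P_r(\bfu^n_{FOM}) - P_r(\bfu^n)}$, followed by the $L^2$ stability of $P_r$ to get $\norm{P_r(\bfu^n_{FOM}) - P_r(\bfu^n)} \leq \norm{\bfu^n_{FOM} - \bfu^n}$. This is exactly what generates the $\norm{S_r}_2\,\norm{\bfu^n - \bfu^n_{FOM}}^2$ contribution in \eqref{eq: Gen_H1_ROM_estimate}; the subtlety to watch is that the inverse estimate may only be applied to the ROM difference, since $\bfu^n - \bfu^n_{FOM}$ need not lie in $\bX_r$. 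Squaring with $(a+b+c)^2 \leq 3(a^2+b^2+c^2)$ and averaging then completes the proof.
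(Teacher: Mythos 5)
Your proposal is correct and follows essentially the same route as the paper: the paper proves this by the same best-approximation property of $P_r$, the same insertion of the FOM solution with the competitor $\vr = P_r(\bfu^n_{FOM})$, and, for the gradient bound, the same three-term splitting handled by the FOM error, Lemma~\ref{Lemma: H1POD_Proj_err}, and the POD inverse estimate of Lemma~\ref{lemma: ROM_inv_estimate} combined with the $L^2$ stability of $P_r$. The only difference is presentational — the paper keeps the argument implicit by citing the analogous proof of Lemma~\ref{lemma: estimationprop} and simply leaving the FOM error terms abstract, exactly as you anticipated.
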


We also assume the following bounds, analogous to those in \cite{iliescu2014are}:
\begin{assumption}\label{as: Rom Error Assumption}
    For any $ \bfu^n \in \bX$, where $ n = 0, 1, \dots , M $,
    its $ L^2$ projection, $ P_r(\bfu^n) \in \bfX_r$, satisfies the following error estimates:
    \al
       \norm{\bfu^n - P_r(\bfu^n)}^2 &\leq C \pare{ N^{-2k-2} + \delt^6 + \Ll2  },\label{eq: L2_assumption} \\
       \norm{\Grad\pare{\bfu^n - P_r(\bfu^n)}}^2 &\leq C \pare{ N^{-2k} + \norm{S_r}_2 N^{-2k-2} +  (1 + \norm{S_r}_2 )\delt^6 + \Lh10 }.\label{eq: H1_assumption}
    \eal
\end{assumption}

\begin{remark}[See also Remark 3.1 in~\cite{moore2024numerical}.]

    The pointwise in time error bounds in Assumption~\ref{as: Rom Error Assumption} are needed in the proof of Theorem~\ref{thm:ErrorEstimate}.
    Specifically, we use those bounds 
    to prove inequalities~\eqref{eq: sharper} and 
    \eqref{eq: 2stabeta}.
    We emphasize that using instead the average error bounds in Lemma~\ref{lemma: estimationprop} in the proof of Theorem~\ref{thm:ErrorEstimate} would yield suboptimal error bounds (see also~\cite[Remark 3.1 and Lemma 4.2]{moore2024numerical}).

    Assumption~\ref{as: Rom Error Assumption} and its important effect on the optimality of {\it a priori} error bounds was carefully discussed in~\cite{koc2021optimal} (see also~\cite[Remark 3.2]{iliescu2014variational}).
    In particular, it was shown in~\cite{koc2021optimal} that using both the snapshots and the snapshot difference quotients to construct the ROM basis yields optimal error bounds without making Assumption~\ref{as: Rom Error Assumption}.
    This result was further improved in~\cite{locke_singler,nos_letters}, where optimal error bounds were proven using only the snapshot difference quotients and the snapshot at the initial time or the mean value of the snapshots.
    Further improvements were recently presented in~\cite{garcia2024pointwise}.
    
    For simplicity, in this paper we do not include the snapshot difference quotients, and instead assume the pointwise in time error bounds in Assumption~\ref{as: Rom Error Assumption}.
    \label{remark:pointwise-error-assumption}
\end{remark}

\subsection{ROM filtering}
We 
formally introduce the time relaxation term $ \bfu^* = \bfu - \bar{\bfu}$, where $ \baru$ denotes the spatially averaged representation of $ \bfu$. Analogous to what was done for the continuous differential filter $G$ \cite{germano1986differential,grisvard1985elliptic} and discrete differential filter $ G_h$ \cite{kaya2012convergence},  we define the ROM differential filter as follows: For $ \bfu \in \bfX$ 
and a given filter width $\delta > 0$, we let $ G_r: \bX \to \bX $ 
be defined by $ G_r(\bfu) := \baru^r$, where
$ \baru^r \in \bX$ is the unique solution of the following variational problem: 
\al \label{eq: rom filter}
\delta^2\pare{\Grad\baru^r,\Grad\vr} + \pare{\baru^r,\vr} = (\bfu,\vr), \; \forall~\vr \in \bX_r.
\eal
We note that, when the ROM basis is generated by using the POD strategy, the ROM basis functions (and, thus, the ROM solution) inherit the weakly divergence-free property from the FOM. 
Leveraging this fact, we do not need to use a Stokes filter, which has been used for weekly preservation of incompressibility \cite{connors2010convergence,manica2011enabling}, 
and 
utilize instead $ G_r$ as defined in equation \eqref{eq: rom filter}.
\begin{lemma}(ROM Filtering Error Estimate \cite{xie2018numerical}) \label{lemma: Rom_Filter_Error}
    For $ \bfu^n \in \bX$, $ n = 0, 1, \dots , M, $ the ROM filter $ G_r$ satisfies 
    \al
        \delta^2& \norm{\Grad\pare{\bfu^n - G_r(\bfu^n)}}^2 + \norm{\pare{\bfu^n - G_r(\bfu^n)}}^2 
        \leq C \pare{ {N^{-2k-2} + \delt^6}  + \Ll2} + C \delta^4 \nonumber \\
        & + C\delta^2\pare{ {N^{-2k}} + \norm{S_r}_2 {N^{-2k-2}} + (1 + \norm{S_r}_2 ){\delt^6} + \Lh10  }.
    \eal
\end{lemma}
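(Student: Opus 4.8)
The plan is to split the filter error through the ROM $L^2$ projection and reduce everything to the projection-error bounds of Assumption~\ref{as: Rom Error Assumption}. Write $\bfeta := \bfu^n - P_r(\bfu^n)$ for the $L^2$ projection error and $\bfpsi_r := P_r(\bfu^n) - G_r(\bfu^n) \in \bX_r$, so that $\bfu^n - G_r(\bfu^n) = \bfeta + \bfpsi_r$. The reason for routing through $P_r$ is that $\bfeta$ is $L^2$-orthogonal to $\bX_r$ by Definition~\ref{def: Rom_Proj}, i.e. $(\bfeta,\vr)=0$ for all $\vr\in\bX_r$, which is exactly what makes the error equation for the in-space part $\bfpsi_r$ collapse to something clean.

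First I would derive that error equation. Testing the filter definition \eqref{eq: rom filter} with $\vr \in \bX_r$, using the $L^2$-projection property $(\bfu^n, \vr) = (P_r(\bfu^n), \vr)$ (equivalently $(\bfeta,\vr)=0$), and substituting $G_r(\bfu^n) = P_r(\bfu^n) - \bfpsi_r$ yields
\begin{equation*}
\delta^2(\Grad \bfpsi_r, \Grad \vr) + (\bfpsi_r, \vr) = \delta^2\pare{\Grad P_r(\bfu^n), \Grad \vr}, \qquad \forall\, \vr \in \bX_r.
\end{equation*}
Taking $\vr = \bfpsi_r$ and writing $\Grad P_r(\bfu^n) = \Grad \bfu^n - \Grad \bfeta$ gives
\begin{equation*}
\delta^2 \norm{\Grad \bfpsi_r}^2 + \norm{\bfpsi_r}^2 = \delta^2\pare{\Grad \bfu^n, \Grad \bfpsi_r} - \delta^2\pare{\Grad \bfeta, \Grad \bfpsi_r}.
\end{equation*}

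The crucial step, and the one I expect to be the main obstacle, is handling the first term on the right so as to recover the sharp $\delta^4$ scaling instead of a crude $\delta^2$ bound. Here I would integrate by parts: since $\bfpsi_r \in \bX_r \subset \bfH_0^1(\Omega)$ the boundary term vanishes and $(\Grad \bfu^n, \Grad \bfpsi_r) = -(\Delta \bfu^n, \bfpsi_r)$, which is where the extra regularity $\bfu^n \in \bfH^2(\Omega)$ with $\norm{\bfu^n}_2$ uniformly bounded enters (consistent with the ``sufficient regularity'' already invoked for Assumption~\ref{as: FEMassumption}). Pairing $\delta^2\norm{\Delta \bfu^n}$ with $\norm{\bfpsi_r}$ (the $L^2$ part, not the gradient part) in Young's inequality produces a term $C\delta^4\norm{\bfu^n}_2^2$ plus an absorbable $\tfrac14\norm{\bfpsi_r}^2$; the remaining term is estimated by Cauchy--Schwarz and Young as $\tfrac{\delta^2}{4}\norm{\Grad\bfpsi_r}^2 + C\delta^2\norm{\Grad\bfeta}^2$. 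Absorbing the $\bfpsi_r$ contributions into the left-hand side leaves
\begin{equation*}
\delta^2 \norm{\Grad \bfpsi_r}^2 + \norm{\bfpsi_r}^2 \leq C\delta^4 + C\delta^2 \norm{\Grad \bfeta}^2 .
\end{equation*}
Note that pairing $\delta^2\norm{\Grad\bfu^n}$ with $\norm{\Grad\bfpsi_r}$ instead would only give $C\delta^2$, so the integration by parts is genuinely what buys the quartic power of $\delta$.

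Finally I would assemble the estimate. By the triangle inequality,
\begin{equation*}
\delta^2\norm{\Grad(\bfu^n - G_r(\bfu^n))}^2 + \norm{\bfu^n - G_r(\bfu^n)}^2 \le 2\pare{\delta^2\norm{\Grad\bfeta}^2 + \norm{\bfeta}^2} + 2\pare{\delta^2\norm{\Grad\bfpsi_r}^2 + \norm{\bfpsi_r}^2},
\end{equation*}
so combining with the previous display leaves only $C\norm{\bfeta}^2 + C\delta^2\norm{\Grad\bfeta}^2 + C\delta^4$. Inserting the $L^2$ bound \eqref{eq: L2_assumption} for $\norm{\bfeta}^2$ and the $H_0^1$ bound \eqref{eq: H1_assumption} for $\norm{\Grad\bfeta}^2$ from Assumption~\ref{as: Rom Error Assumption} reproduces exactly the claimed right-hand side. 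Everything apart from the integration-by-parts/regularity argument is orthogonality of the projection error, Cauchy--Schwarz, and Young's inequality.
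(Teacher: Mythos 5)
Your proposal is correct and follows essentially the same route as the paper, which defers to the proof of Lemma 4.3 in \cite{xie2018numerical}: decompose $\bfu^n - G_r(\bfu^n)$ through the ROM $L^2$ projection, exploit $(\bfeta,\vr)=0$ to get the clean error equation for $\bfpsi_r$, integrate by parts on $\delta^2(\Grad\bfu^n,\Grad\bfpsi_r)$ to extract the $\delta^4$ term, and close with Young's inequality and the pointwise projection bounds of Assumption~\ref{as: Rom Error Assumption} (the SEM estimates replacing the FEM ones of the cited reference). You also correctly identify the integration by parts as the step that buys the quartic power of $\delta$, consistent with the paper's remark that sharper $\delta$ scalings of the $H^1$ seminorm are not available in the ROM setting.
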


\begin{remark}
    The proof of Lemma \ref{lemma: Rom_Filter_Error} follows along the same lines as the proof of Lemma 4.3 in \cite{xie2018numerical}. The main difference is that one needs to use SEM estimates (Assumption \ref{as: FEMassumption}) instead of FEM estimates.
    Furthermore, as pointed out in~\cite[Remark 4.1]{xie2018numerical}, since the $H^1$ stability of the $L^2$ projection is not available in a ROM setting, the better $\delta$ scalings of the $H^1$ seminorm of the filtering error in~\cite{dunca2013mathematical} cannot be extended to the ROM setting in a straightforward manner.    
\end{remark}

It is easy to check that $ G_r$  is symmetric and semi-positive definite. 
The operator is also compact and $ \norm{G_r} \leq 1$.
Its associated eigenvalues satisfy $ 0 \leq \lambda_j \leq 1.$ It is also easy to check that $ (I-G_r)$ is symmetric, semi-positive definite, and compact. Moreover, its eigenvalues are $ 0 \leq 1-\lambda_j \leq 1$ so that $ \norm{(I-G_r)} \leq 1$.
Finally, it is easy to check that $ G_r \bfu = \bfu$ implies $ \bfu = 0 $,  so that $ \lambda_j = 1$ is not an eigenvalue of $G_r$,  and then $(I-G_r)$ is strictly positive definite. This allows us to define the 
norm:
\al \label{eq: TRM_norm}
        \norm{\bfphi}_* = \sqrt{ \pare{(I-G_r)\bfphi,\bfphi} }.
    \eal
More details on this can be 
found in \cite[Lemma 2.1]{ervin2007numerical}.

\section{Stability and Error Bounds}
\label{sec: main}

In this section, we formally introduce our fully discrete 
TR-ROM. First, in Lemma \ref{lemma: Stability}, we prove unconditional stability of the new TR-TOM. Then, in Theorem \ref{thm:ErrorEstimate},  we prove an {\it a priori} error bound for the TR-ROM. Finally, in Section \ref{ssec: Parameter Scalings}, we leverage the error bound in Theorem \ref{thm:ErrorEstimate} to prove parameter scalings for the TR-ROM relaxation parameter, $\chi$.

The fully discrete formulation of the TR-ROM is as follows:
For $ n= 0, 1, \dots, M-2, M-1$, find $ \ur^{n+1} \in \bX_r $ satisfying  
\al
\frac{1}{\delt}(\ur^{n+1}- \ur^n,\vr) &+ b^*( \ur^{n+1}, \ur^{n+1},\vr) + \nu(\nabla  \ur^{n+1},\nabla \vr) \nonumber\\
&+ \chi \pare{(I - G_r) \ur^{n+1} ,\vr }
 =  (\bff (t^{n+1}),\vr ), \quad \forall~ \vr \in \bX_r.
 \label{eq: ROM_TRM}
\eal
We assume that $\ur^0 $ is the $L_2$ projection of $\bfu^0$ into $ \bX_r$ i.e., $ \ur^0 = P_r(\bfu^0)$.

To prove the TR-ROM's unconditional stability in Lemma \ref{lemma: Stability}, we 
adapt the approach in \cite{belding2022efficient,ReyesGSM,LeoEMAC,neda2012increasing} to the ROM setting. 

\begin{lemma}\label{lemma: Stability}
The solution to the TR-ROM given by \eqref{eq: ROM_TRM} is unconditionally stable: For any $\Delta t>0$, the solution satisfies:
\begin{equation}
   || \ur^{M}||^2  + \nu |||\nabla \ur^{n+1}|||_{2,0}^2 + 2\chi\delt \sum_{n=0}^{M-1}|| \ur^{{n+1}}||_*^2
   \leq C_{s,r}:=||\bfu^0||^2 
   + \frac{\delt}{\nu} \sum_{n=0}^{M-1}||\bff^{n+1}||_{-1}^2. \label{stability_bound}
\end{equation} 
\end{lemma}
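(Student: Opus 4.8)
The plan is to run a standard energy-stability argument for the fully discrete scheme. First I would test the scheme \eqref{eq: ROM_TRM} with the natural choice $\vr = \ur^{n+1}$, which extracts the discrete kinetic energy. This immediately kills the nonlinear term, since the skew-symmetric form satisfies $b^*(\ur^{n+1},\ur^{n+1},\ur^{n+1}) = 0$ by the antisymmetry in \eqref{eq: Bstar_term}, and it turns the viscous term into $\nu\norm{\Grad\ur^{n+1}}^2$. The time-derivative term becomes $\tfrac{1}{\delt}(\ur^{n+1}-\ur^n,\ur^{n+1})$, which I would handle with the polarization identity $(a-b,a) = \tfrac{1}{2}(\norm{a}^2 - \norm{b}^2 + \norm{a-b}^2)$, yielding $\tfrac{1}{2\delt}\pare{\norm{\ur^{n+1}}^2 - \norm{\ur^n}^2 + \norm{\ur^{n+1}-\ur^n}^2}$. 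The crucial observation for the relaxation term is that, by the definition of the $*$-norm in \eqref{eq: TRM_norm}, one has exactly $\chi\pare{(I-G_r)\ur^{n+1},\ur^{n+1}} = \chi\norm{\ur^{n+1}}_*^2 \ge 0$, so this term is dissipative and contributes positively to the left-hand side.

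Next I would bound the forcing term on the right. Writing $(\bff^{n+1},\ur^{n+1})$ and using duality together with Young's inequality with the viscosity as the balancing weight gives $(\bff^{n+1},\ur^{n+1}) \le \norm{\bff^{n+1}}_{-1}\norm{\Grad\ur^{n+1}} \le \tfrac{1}{2\nu}\norm{\bff^{n+1}}_{-1}^2 + \tfrac{\nu}{2}\norm{\Grad\ur^{n+1}}^2$. The second piece is absorbed by half of the viscous dissipation already on the left-hand side, leaving $\tfrac{\nu}{2}\norm{\Grad\ur^{n+1}}^2$ surviving; this is what produces the $\nu\trinorm{\Grad\ur}_{2,0}^2$ contribution after I account for the $\delt$ factor and the sum in the definition of the discrete $\trinorm{\cdot}_{2,0}$ norm. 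At this stage the per-step inequality reads, after multiplying through by $2\delt$,
\begin{equation*}
\norm{\ur^{n+1}}^2 - \norm{\ur^n}^2 + \norm{\ur^{n+1}-\ur^n}^2 + \nu\delt\norm{\Grad\ur^{n+1}}^2 + 2\chi\delt\norm{\ur^{n+1}}_*^2 \le \frac{\delt}{\nu}\norm{\bff^{n+1}}_{-1}^2.
\end{equation*}

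Finally I would sum this telescoping inequality over $n = 0, 1, \dots, M-1$. The kinetic energy terms telescope to $\norm{\ur^M}^2 - \norm{\ur^0}^2$, the dissipation and relaxation terms accumulate into the discrete norms appearing in \eqref{stability_bound}, and the nonnegative jump terms $\norm{\ur^{n+1}-\ur^n}^2$ can simply be dropped from the left-hand side. Moving $\norm{\ur^0}^2$ to the right and bounding it by $\norm{\bfu^0}^2$ via the $L^2$-stability of $P_r$ (since $\ur^0 = P_r(\bfu^0)$) then yields the claimed bound \eqref{stability_bound}. I would emphasize that, because every step is an equality-or-inequality that holds for all $\delt > 0$ with no smallness restriction — the nonlinearity vanishes identically and the relaxation term is unconditionally dissipative — no Gronwall argument or timestep constraint is needed, which is precisely the unconditional stability being asserted. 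The only point requiring a little care, and the likely main obstacle, is the bookkeeping that correctly identifies $\chi\pare{(I-G_r)\ur^{n+1},\ur^{n+1}}$ with $\chi\norm{\ur^{n+1}}_*^2$ and confirms its sign via the strict positive-definiteness of $(I-G_r)$ established just before \eqref{eq: TRM_norm}; everything else is routine.
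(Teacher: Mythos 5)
Your proposal is correct and follows essentially the same argument as the paper: test with $\vr=\ur^{n+1}$ so the skew-symmetric nonlinearity vanishes, identify the relaxation term with $\chi\norm{\ur^{n+1}}_*^2$, bound the forcing by duality and Young's inequality absorbing half the viscous dissipation, multiply by $2\delt$, telescope the sum, and use the $L^2$-stability of $P_r$ on the initial datum. The only cosmetic difference is that you retain the nonnegative jump terms $\norm{\ur^{n+1}-\ur^n}^2$ explicitly before discarding them, which the paper does implicitly.
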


\begin{proof}
Choosing $\vr= \ur^{n+1}$ in \eqref{eq: ROM_TRM} yields
\als
\pare{\frac{\ur^{n+1} - \ur^n}{\delt}, \ur^{n+1}} + \nu \pare{\Grad\ur^{n+1},\Grad\ur^{n+1}} + \chi \pare{(I - G_r) \ur^{n+1} ,\ur^{n+1} } = (\bff^{n+1},\ur^{n+1})
\eals
since the skew-symmetric nonlinear term  vanishes. 
After using Cauchy-Schwarz and Young's inequalities, the dual norm of $ \bff$, and \eqref{eq: TRM_norm}, we have
\als
\frac{1}{2\Delta t} \left( \|  \ur^{n+1} \|^2 - \| \ur^n \|^2 \right) + \nu \| \nabla \ur^{n+1} \|^2 + \chi \norm{\ur^{{n+1}}}_*^2 &\leq \norm{\bff(t^{n+1})}_{-1}\norm{ \Grad \ur^{n+1}} \\
&\leq   \frac{\nu}{2} \| \nabla \ur^{n+1} \|^2  + \frac{\nu^{-1}}{2} \| \bff(t^{n+1}) \|_{-1}^2.
\eals
Rearranging some terms and multiplying by $ 2 \delt $, we obtain
\als
\left( \| \ur^{n+1} \|^2 - \| \ur^n \|^2 \right) + \nu\delt \| \nabla \ur^{n+1} \|^2 + 2\chi \delt \norm{\ur^{{n+1}}}_*^2 &\leq \frac{\delt}{\nu}  \| \bff(t^{n+1}) \|_{-1}^2 .
\eals
Because $\ur^0 $ is the $L_2$ projection of $\bfu^0$ onto $ \bX_r$, summing over time steps yields the following stability bound:
\begin{multline}
   \| \ur^{M} \|^2  +  \nu \left( \Delta t \sum_{n=0}^{M-1} \| \nabla  \ur^{n+1} \|^2 \right) + 2\chi \pare{ \delt \sum_{n=0}^{M-1} \norm{\ur^{{n+1}}}_*^2 } \\
   \leq \| \bfu^0 \|^2 + \nu^{-1}  \left( \Delta t \sum_{n=0}^{M-1} \|  \bff(t^{n+1}) \|_{-1}^2 \right). 
\end{multline}
\end{proof}

To prove the \textit{a priori} error bound in Theorem \ref{thm:ErrorEstimate}, we 
extend the strategy in \cite{belding2022efficient, neda2012increasing, LeoEMAC} to the ROM setting. 

\begin{theorem}\label{thm:ErrorEstimate}
Let $ \ur $ be the solution of the TR-ROM 
\eqref{eq: ROM_TRM}, with $\bfu$ being the true solution of NSE \eqref{eq:Strong_NSE1}-\eqref{eq:Strong_NSE2}, and let 
$\bfe^{n}= \bfu^{n}- \ur^{n}$. Under the SE Assumption \ref{as: FEMassumption}, ROM projection Assumption \ref{as: Rom Error Assumption}, 
and for sufficiently small $\delt $, we have:
\al
&  ||| \bfe|||^2_{\infty,0}+ \nu|||\nabla  \bfe^{n+1}|||^2_{2,0} + 2\chi \delt \sum_{n=0}^{M-1} \norm{ \bfe^{n+1}}_*^2 \leq CK \bigg( \nu\inv (N^{-2s-2} + \delt^2 +  \chi^2 \delta^4 ) \nonumber\\
&   + \nu\inv \chi^2  \pare{N^{-2k-2} + \delt^6 + \Ll2} 
+ \nu\inv \pare{ \mathcal{A}(N,\delt,S_r,\Ll2,\Lh10) +   \sqrt{ \Ll2} \sqrt{\Lh10}}\nonumber \\
&+  \pare{\nu + \frac{\chi^2\delta^2 + C_{s,r}}{\nu} } \bigg(  N^{-2k} + \norm{S_r}_2 N^{-2k-2} +  (1 + \norm{S_r}_2 ) \delt^6 
+ \Lh10 \bigg) \Bigg), \label{eq:TRROM_Error}
\eal
where $K$ depends exponentially on $\nu^{-3}$,  
$C$ depends on $\bfu$, $\Grad\bfu$, $\Delta \bfu$, $\bfu_t$, $\bfu_{tt}$, $p$, $C(\Omega)$, but not on $ \delt$, $N$, $\norm{S_r}_2$, $\nu$, $\delta$ or $ \chi$, 
and $ \mathcal{A}$ is defined in \eqref{eq: A}. 
\end{theorem}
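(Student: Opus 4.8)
The plan is to use the standard energy argument for a priori ROM error estimates, splitting the error through the ROM $L^2$ projection $P_r$ from Definition \ref{def: Rom_Proj}. Writing $\bfe^n = \bfu^n - \ur^n = \bfeta^n - \phir^n$, where $\bfeta^n := \bfu^n - P_r(\bfu^n)$ is the approximation (projection) error and $\phir^n := \ur^n - P_r(\bfu^n) \in \bX_r$ is the discretization remainder living in the ROM space, the part $\bfeta^n$ is controlled directly by the pointwise bounds in Assumption \ref{as: Rom Error Assumption}, so the real work is estimating $\phir^n$. First I would write the weak equation satisfied by the true solution $\bfu$ tested against an arbitrary $\vr \in \bX_r$: replacing $\bfu_t(t^{n+1})$ by the backward difference $\delt^{-1}(\bfu^{n+1}-\bfu^n)$ introduces a first-order time-truncation term $\boldsymbol{\tau}^{n+1}$ with $\|\boldsymbol{\tau}^{n+1}\| \le C\delt\,\|\bfu_{tt}\|$, while the pressure term $(\nabla p^{n+1},\vr)$ does not vanish because $\vr \in \bX_r \subset \bfV_N$ is only weakly divergence-free with respect to $Q_N$; subtracting any $q_N\in Q_N$ and using \eqref{prop3} bounds it by $C N^{-s-1}\|p^{n+1}\|_{s+1}$, which is the source of the $N^{-2s-2}$ contribution. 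Since $\bfu$ does not satisfy the relaxation term, $\chi((I-G_r)\bfu^{n+1},\vr)$ appears as a further consistency term on the right-hand side.

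Subtracting the TR-ROM \eqref{eq: ROM_TRM} from this consistency equation gives the error equation for $\phir$. Next I would test with $\vr=\phir^{n+1}$, apply the polarization identity $(\phir^{n+1}-\phir^n,\phir^{n+1}) = \frac{1}{2}(\|\phir^{n+1}\|^2-\|\phir^n\|^2+\|\phir^{n+1}-\phir^n\|^2)$, and keep the viscous term $\nu\|\nabla\phir^{n+1}\|^2$ and the relaxation term $\chi\|\phir^{n+1}\|_*^2$ (positive, from \eqref{eq: TRM_norm}) on the left. The relaxation consistency term is split by Cauchy--Schwarz in the $(\cdot,\cdot)_*$ inner product as $\chi((I-G_r)\bfu^{n+1},\phir^{n+1}) \le \frac{\chi}{2}\|\phir^{n+1}\|_*^2 + \frac{\chi}{2}\|(I-G_r)\bfu^{n+1}\|_*^2$; the first piece is absorbed on the left, and the second is controlled by the filter error estimate in Lemma \ref{lemma: Rom_Filter_Error}, producing the $\chi^2\delta^4$ and $\chi^2(N^{-2k-2}+\delt^6+\Ll2)$ terms. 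The remaining projection contributions $\delt^{-1}(\bfeta^{n+1}-\bfeta^n,\phir^{n+1})$ and $\nu(\nabla\bfeta^{n+1},\nabla\phir^{n+1})$, together with $\boldsymbol{\tau}^{n+1}$ and the pressure term, are bounded by Cauchy--Schwarz and Young's inequality, absorbing gradient factors into $\nu\|\nabla\phir^{n+1}\|^2$; the mixed $L^2/H^1$ products here generate the $\sqrt{\Ll2}\sqrt{\Lh10}$ term and the composite quantity $\mathcal{A}$.

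The main obstacle is the nonlinear term $b^*(\bfu^{n+1},\bfu^{n+1},\phir^{n+1})-b^*(\ur^{n+1},\ur^{n+1},\phir^{n+1})$. I would decompose it as $b^*(\bfe^{n+1},\bfu^{n+1},\phir^{n+1}) + b^*(\ur^{n+1},\bfe^{n+1},\phir^{n+1})$ and substitute $\bfe^{n+1}=\bfeta^{n+1}-\phir^{n+1}$, producing terms linear in $\bfeta$ (harmless, controlled by the projection bounds and Lemma \ref{TRIL}) and the critical quadratic-in-$\phir$ term $b^*(\phir^{n+1},\bfu^{n+1},\phir^{n+1})$, noting that $b^*(\phir^{n+1},\phir^{n+1},\phir^{n+1})=0$ by skew-symmetry. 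For the critical term I would use \eqref{eq: bbound}, namely $b^*(\phir^{n+1},\bfu^{n+1},\phir^{n+1})\le C\|\nabla\phir^{n+1}\|\,\|\nabla\bfu^{n+1}\|\,\|\phir^{n+1}\|^{1/2}\|\nabla\phir^{n+1}\|^{1/2}$, and apply Young's inequality with exponents $4$ and $4/3$; absorbing the $\|\nabla\phir^{n+1}\|^2$ factor into the viscous term leaves a coefficient proportional to $\nu^{-3}\|\nabla\bfu^{n+1}\|^4\|\phir^{n+1}\|^2$, which is exactly the $d_n$ factor feeding the discrete Gronwall Lemma \ref{discreteGronwall} and producing the exponential dependence of $K$ on $\nu^{-3}$. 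I would also use the POD inverse estimate (Lemma \ref{lemma: ROM_inv_estimate}) and the stability bound $C_{s,r}$ from Lemma \ref{lemma: Stability} to control $\|\nabla\ur^{n+1}\|$ where it appears, which accounts for the $C_{s,r}$ factor multiplying the $H^1$ projection-error terms. Finally, multiplying through by $2\delt$, summing over $n=0,\dots,M-1$, and invoking Lemma \ref{discreteGronwall} (valid once $\delt$ is small enough that $\delt\,d_n<1$) yields the bound on $\phir$, after which the triangle inequality $\|\bfe^n\|\le\|\bfeta^n\|+\|\phir^n\|$ combined with Assumption \ref{as: Rom Error Assumption} recovers the stated estimate \eqref{eq:TRROM_Error}.
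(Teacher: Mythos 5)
Your proposal follows essentially the same route as the paper's proof: split the error through the ROM $L^2$ projection, write a consistency (weak NSE) equation tested in $\bX_r$ with the relaxation term added and subtracted, test the error equation with $\phir^{n+1}$, bound the nonlinear terms via Lemma~\ref{TRIL} (with the critical quadratic term $b^*(\phir^{n+1},\bfu^{n+1},\phir^{n+1})$ handled exactly as you describe, producing the $\nu^{-3}\|\nabla\bfu^{n+1}\|^4\|\phir^{n+1}\|^2$ Gronwall factor and the exponential constant $K$), and close with the discrete Gronwall lemma and the triangle inequality. That is the paper's argument.

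Two details, however, deviate from the paper and as written would not reproduce the right-hand side of \eqref{eq:TRROM_Error}. First, the term $\delt^{-1}(\bfeta^{n+1}-\bfeta^n,\phir^{n+1})$ is not estimated at all in the paper: since $\bfeta^n=\bfu^n-P_r(\bfu^n)$ is $L^2$-orthogonal to $\bX_r$ by Definition~\ref{def: Rom_Proj} and $\phir^{n+1}\in\bX_r$, it vanishes identically; your Cauchy--Schwarz/Young treatment would instead leave a $\nu^{-1}\delt^{-1}\int\norm{\bfeta_t}^2$-type contribution that does not appear in the stated bound. Second, your handling of the relaxation consistency term $\chi((I-G_r)\bfu^{n+1},\phir^{n+1})$ by Cauchy--Schwarz in the $(\cdot,\cdot)_*$ inner product, absorbing half into $\chi\norm{\phir^{n+1}}_*^2$, leaves a remainder bounded by $\chi\norm{(I-G_r)\bfu^{n+1}}^2$, i.e.\ a term scaling like $\chi\,\delta^4$ rather than the $\nu\inv\chi^2\delta^4$ claimed in the theorem (and used in the parameter scaling of Section~\ref{ssec: Parameter Scalings}); the paper instead bounds $\chi\norm{(I-G_r)\bfu^{n+1}}\,\norm{\phir^{n+1}}$ with Poincar\'e--Friedrichs and Young against $\tfrac{\nu}{16}\norm{\Grad\phir^{n+1}}^2$, leaving $C\chi^2\nu\inv\norm{(I-G_r)\bfu^{n+1}}^2$, and only then invokes Lemma~\ref{lemma: Rom_Filter_Error}. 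A minor further point: the factor $\norm{\ur^{n+1}}\,\norm{\Grad\ur^{n+1}}$ in the $b^*(\ur^{n+1},\bfeta^{n+1},\phir^{n+1})$ bound is controlled by the stability constant $C_{s,r}$ of Lemma~\ref{lemma: Stability} together with Cauchy--Schwarz over the time sum and the pointwise bounds of Assumption~\ref{as: Rom Error Assumption}, not by a POD inverse estimate.
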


\begin{proof}
First, in \eqref{eq: skew} we introduce a weak formulation of the NSE:
Find $(\bfu,p)\in \bX \times Q $ satisfying for all $\vr \in \bX_r $
\al \label{eq: skew}
& \pare{\frac{\bfu^{n+1} - \bfu^n }{\delt},\vr} + b^*(\bfu,\bfu,\vr)  + \nu (\nabla \bfu,\nabla \vr) \textcolor{black}{+ \chi ((I-G_r)\bfu^{n+1},\vr ) } 
 \nonumber \\
&= \textcolor{black}{ \chi ((I-G_r)\bfu^{n+1},\vr ) }+  (p, \divergence \vr) + \pare{\frac{\bfu^{n+1} - \bfu^n }{\delt} - \bfu_t ,\vr} + (\bff,\bfv_r). 
\eal

We split the error in the usual way as $\bfe^{n}= \bfeta^{n}+\phir^{n}$,  where $\bfeta =\bfu-P_{r}(\bfu)$ and $\phir=P_{r}(\bfu)- \ur \, \in \bX_r$, with $P_{r}(\bfu)$ being the $L^2$ projection of $\bfu$ in $\bX_r$.
Subtract equation \eqref{eq: ROM_TRM} from the weak form of the NSE \eqref{eq: skew} evaluated at $t^{n+1}$  to obtain
\begin{eqnarray*}
 & & \frac{1}{\Delta t}(  \bfe^{n+1}- \bfe^n,\vr)
 +  b^{\ast}( \bfu^{n+1}, \bfu^{n+1},\vr) - b^{\ast}(   \ur^{n+1},  \ur^{n+1},\vr) + \nu(\nabla   \bfe^{n+1},\nabla \vr)\nonumber\\
& & \textcolor{black}{ + \chi ((I-G_r)\bfe^{n+1},\vr)}  =    \left( \frac{1}{\Delta t}(\bfu^{n+1}- \bfu^n)-\bfu_t^{n+1},\vr \right) \textcolor{black}{+ \chi ((I-G_r)\bfu^{n+1},\vr)} \nonumber\\
& & +  (p^{n+1}, \divergence \vr), \quad \forall \vr \in \bX_r.
\end{eqnarray*}
Note that the nonlinear terms can be rewritten as
\begin{eqnarray*}
& &b^{\ast}( \bfu^{n+1}, \bfu^{n+1},\vr) - b^{\ast}(   \ur^{n+1},  \ur^{n+1},\vr)
= b^{\ast}(  \bfe^{n+1}, \bfu^{n+1},\vr) - b^{\ast}(   \ur^{n+1},  \bfe^{n+1},\vr) = \\
& &b^{\ast}( \bfeta^{n+1}, \bfu^{n+1},\vr) - b^{\ast}(   \phir^{n+1}, \bfu^{n+1},\vr)
+ b^{\ast}(  \ur^{n+1}, \bfeta^{n+1},\vr) - b^{\ast}(  \ur^{n+1},  \phir^{n+1},\vr).
\end{eqnarray*}
Using the above equality, splitting the error, letting $\vr=  \phir^{n+1}$, and noting that \\
$b^{\ast}(  \ur^{n+1},  \phir^{n+1},  \phir^{n+1})=0$, we obtain
\begin{eqnarray}
& &\frac{1}{2\Delta t}\left(||  \phir^{n+1}||^2-||\phir^{n}||^2 \right) + \nu||\nabla   \phir^{n+1}||^2 {+ \chi\norm{\phir^{{n+1}}}_*^2}
= \frac{1}{\Delta t}(\bfeta^{n+1}- \bfeta^n,  \phir^{n+1})\nonumber \\
& & + \nu(\nabla \bfeta^{n+1},\nabla \phir^{n+1})+(\bfu_t^{n+1}-\frac{1}{\Delta t}(\bfu^{n+1}-\bfu^n),   \phir^{n+1}) + b^{\ast}( \bfeta^{n+1}, \bfu^{n+1},  \phir^{n+1}) \nonumber \\
& &\qquad - b^{\ast}(  \phir^{n+1}, \bfu^{n+1},  \phir^{n+1})+ b^{\ast}(   \ur^{n+1}, \bfeta^{n+1},  \phir^{n+1}) +  (p^{n+1}, \divergence \phir^{n+1})\nonumber \\
& & \qquad {+ \chi ((I-G_r)\bfeta^{n+1},\phir^{n+1}) + \chi ((I-G_r)\bfu^{n+1},\phir^{n+1}) } \label{error_1} \\
& & \qquad \leq |T_1|+|T_2|+|T_3|+|T_4|+|T_5|+|T_6|+|T_7|+|T_8|+|T_9|. \nonumber
\end{eqnarray}
We now bound the above terms. 
By Definition \ref{def: Rom_Proj}, $ (\bfeta, \phir) =0$, 
which yields 
\al
|T_1| = \frac{1}{\Delta t}\abs{(\bfeta^{n+1}- \bfeta^n,  \phir^{n+1})} = 0. \label{t1_bound}
\eal
The next five terms are all bounded using standard methods.
\begin{eqnarray}
  |T_2| & \leq & \frac{\nu}{16}||\nabla   \phir^{n+1}||^2+C\nu||\nabla \bfeta^{n+1}||^2, \label{t2_bound} \\
  |T_3| & \leq & \frac{\nu}{16}||\nabla   \phir^{n+1}||^2+C\nu^{-1}\norm{\bfu_t^{n+1}-\frac{1}{\Delta t}(\bfu^{n+1}-\bfu^n)}^2 \nonumber \\
  & \leq & \frac{\nu}{16}||\nabla   \phir^{n+1}||^2 + C \nu^{-1} \delt \int_{t^n}^{t^{n+1}} \norm{\bfu_{tt}}^2 \, dt,  \label{t3_bound} \\
   |T_4|&=&|b^{\ast}( \bfeta^{n+1}, \bfu^{n+1},  \phir^{n+1})| \leq C \sqrt{|| \bfeta^{n+1}||||\nabla \bfeta^{n+1}||} ||\nabla \bfu^{n+1}|| ||\nabla    \phir^{n+1}|| \nonumber \\
   &\leq &  \frac{\nu}{16}||\nabla   \phir^{n+1}||^2 + C\nu^{-1}|| \bfeta^{n+1}||||\nabla \bfeta^{n+1}|| ||\nabla \bfu^{n+1}||^2, \label{t4_bound}\\
  |T_5|&=&|b^{\ast}(  \phir^{n+1}, \bfu^{n+1},  \phir^{n+1})| \leq  C\sqrt{||  \phir^{n+1}|| ||\nabla   \phir^{n+1}||}||\nabla \bfu^{n+1}|| ||\nabla    \phir^{n+1}||  \nonumber \\
  & \leq &  \frac{\nu}{16}||\nabla   \phir^{n+1}||^2 + C\nu^{-3}||\nabla \bfu^{n+1}||^4 ||  \phir^{n+1}||^2, \label{t5_bound} \\
  |T_6|&=&|b^{\ast}(  \ur^{n+1}, \bfeta^{n+1},  \phir^{n+1})|\leq C\sqrt{||  \ur^{n+1}|| ||\nabla   \ur^{n+1}||}||\nabla \bfeta^{n+1}|| ||\nabla    \phir^{n+1}|| \nonumber \\
  &\leq &  \frac{\nu}{16}||\nabla   \phir^{n+1}||^2 + C\nu^{-1}||  \ur^{n+1}|| ||\nabla   \ur^{n+1}|| ||\nabla \bfeta^{n+1}||^2. \label{t6_bound} 
\end{eqnarray}

For the pressure term, since $ \phir \in \bX_r \subset \bfV_N$, $ (q_N, \divergence \phir^{n+1}) = 0$ can be subtracted and then bounded in the standard way: 
\al
 |T_7| = \abs{\left( p^{n+1} - q_{N}, \Grad \cdot \phir^{n+1} \right)  }\leq  \frac{\nu}{16} \left\|\Grad \phir^{n+1} \right\|^{2}  + C \nu\inv \inf\limits_{  q_{N} \in Q_{N}} \norm{ p^{n+1} - q_N}^2. \label{p_bound}
\eal

For $ T_8 $, we use the fact that $\norm{(I-G_r)\eta}\leq\norm{\eta}$, and Cauchy-Schwarz, Poincare-Friedrichs, and Young's inequalities: 
\al
\abs{T_8} 
\leq \chi  \norm{\bfeta^{n+1}} \norm{\phir^{n+1}} \leq \frac{\nu}{16} \norm{\Grad \phir^{n+1}}^2 + C\chi^2 \nu\inv \norm{\bfeta^{n+1}}^2.
\eal
For $ T_9 $, we use again Cauchy-Schwarz, Young's, and Poincare-Friedrichs inequalities: 
\al\label{t9_bound}
\abs{T_9} &\leq \chi \norm{(I-G_r)\bfu^{n+1}} \norm{\phir^{n+1}} \leq  \frac{\nu}{16} \norm{\Grad \phir^{n+1}}^2 + C\chi^2\nu\inv \norm{ (I-G_r)\bfu^{n+1}}^2 . 
\eal

 Substituting the 
 bounds (\ref{t1_bound})-(\ref{t9_bound}) into (\ref{error_1}), multiplying by $2 \Delta t$, summing up from $n=0$ to $M-1$, and recalling that $||\phir^0||=0$ since $ \ur^0 = P_r(\bfu^0)$, yields: 
\begin{eqnarray}
& & ||  \phir^{M}||^2 +
\nu \Delta t \sum_{n=0}^{M-1} ||\nabla   \phir^{n+1}||^2
\textcolor{black}{+ 2 \chi \delt \sum_{n=0}^{M-1} \norm{\phir^{{n+1}}}_*^2} \nonumber \\
&\leq& C \Delta t \sum_{n=0}^{M-1}\nu^{-3}||\nabla \bfu^{n+1}||^4 ||  \phir^{n+1}||^2 + C\nu \Delta t \sum_{n=0}^{M-1}||\nabla \bfeta^{n+1}||^2 \nonumber \\
&+&  C\nu^{-1}\Delta t \sum_{n=0}^{M-1}  \norm{\bfeta^{n+1}} ||\nabla \bfeta^{n+1}||||\nabla \bfu^{n+1}||^2 +C \nu\inv \delt \sum_{n=0}^{M-1}  \inf\limits_{  q_{N} \in Q_{N}} \norm{ p^{n+1} - q_h}^2\nonumber \\
&+& C\nu^{-1}\Delta t \sum_{n=0}^{M-1} || \ur^{n+1}|| ||\nabla \ur^{n+1}|| ||\nabla \bfeta^{n+1}||^2 + C\nu^{-1}\Delta t^2 \int_{0}^{T}||\bfu_{tt}||^2 dt \nonumber\\
& & \qquad + \, \, { C  \chi^2 \nu\inv \delt \sum_{n=0}^{M-1} \norm{ (I-G_r)\bfu^{n+1}}^2   }+ \textcolor{black}{ C\chi^2 \nu\inv \delt \sum_{n=0}^{M-1} \norm{\bfeta^{n+1}}^2}.  \label{error_3}
\end{eqnarray}
Next, we continue to bound the error terms on the right-hand side of \eqref{error_3}. Using the approximation properties \eqref{prop3},  \eqref{eq: L2_ROM_estimate}, and \eqref{eq: H1_ROM_estimate}, we obtain:
\begin{gather}
  C\chi^2 \nu\inv \delt \sum_{n=0}^{M-1}|| \bfeta^{n+1}||^2 \leq C \chi^2 \nu\inv  \pare{ N^{-2k-2} + \delt^6 + \Ll2  },\\
C\nu \Delta t \sum_{n=0}^{M-1}||\nabla \bfeta^{n+1}||^2 \leq C \nu   \pare{ N^{-2k} + \norm{S_r}_2 N^{-2k-2} +  (1 + \norm{S_r}_2 )\delt^6 + \Lh10 },  
\end{gather}
\begin{align}
 C \nu\inv \delt \sum_{n=0}^{M-1}  \inf\limits_{p_{N} \in Q_{N}} \norm{p^{n+1} - q_N }^{2}  &\leq   C \nu\inv  N^{-2s-2} \delt \sum_{n=0}^{M-1}\norm{p}_{s+1}^{2} \nonumber\\
 &=  C \nu\inv  N^{-2s-2} \trinorm{p}_{2,s+1}^{2} \leq  C \nu\inv N^{-2s-2},\\
C\nu^{-1}\Delta t^2 \int_{0}^{T}||\bfu_{tt}||^2 dt 
&= C\nu^{-1}\delt^2 ||\bfu_{tt}||_{2,0}^2, \leq C \nu^{-1}\delt^2.
\end{align}
For the next bound, we use
\eqref{eq: H1_ROM_estimate} and Assumption \ref{as: Rom Error Assumption}, resulting in:
\al
&C\nu^{-1} \Delta t \sum_{n=0}^{M-1} ||\bfeta^{n+1}|| ||\nabla \bfeta^{n+1}||||\nabla \bfu^{n+1}||^2 
\leq
C\nu^{-1} \delt  \sum_{n=0}^{M-1}  ||\bfeta^{n+1}|| ||\nabla \bfeta^{n+1}|| \nonumber \\
\leq&  \frac{C}{\nu} \pare{ N^{-2k-2} + \delt^6 + \Ll2 }^{1/2} \pare{ N^{-2k} + \norm{S_r}_2 N^{-2k-2} +  (1 + \norm{S_r}_2 )\delt^6 + \Lh10 }^{1/2} \nonumber \\
{\leq}&  \frac{C}{\nu}  \Biggl( N^{-2k-1} +\delt^3 N^{-k}  + \sqrt{\norm{S_r}_2}N^{-2k-2} + \sqrt{\norm{S_r}_2}N^{-k-1}\delt^3 \nonumber \\
&+ \sqrt{1+\norm{S_r}_2}N^{-k-1}\delt^3 + \sqrt{1+\norm{S_r}_2}\delt^6 + N^{-k} \sqrt{ \Ll2} +N^{-k-1} \sqrt{\Lh10} \nonumber \\
& +\delt^3 \sqrt{\Lh10} + \sqrt{\norm{S_r}_2} N^{-k-1} \sqrt{ \Ll2} + \sqrt{1+\norm{S_r}_2} \delt^3 \sqrt{\Ll2} + \textcolor{black}{ \sqrt{\Ll2} \sqrt{\Lh10} } \Biggr). \label{eq: sharper}
\eal
For notational convenience, we denote {all but the last} term in \eqref{eq: sharper} as
\begin{gather}
 \mathcal{A}(N,\delt,S_r,\Ll2,\Lh10) =  N^{-2k-1} +\delt^3 N^{-k}   + \sqrt{\norm{S_r}_2}N^{-k-1}\delt^3 \nonumber \\
+ \sqrt{1+\norm{S_r}_2}N^{-k-1}\delt^3 + \sqrt{1+\norm{S_r}_2}\delt^6 + N^{-k} \sqrt{ \Ll2} + \sqrt{\norm{S_r}_2}N^{-2k-2} \nonumber\\
+ (N^{k-1}+\delt^3) \sqrt{\Lh10} + \sqrt{\norm{S_r}_2} N^{-k-1} \sqrt{\Ll2} + \sqrt{1+\norm{S_r}_2} \delt^3 \sqrt{\Ll2}. \label{eq: A}
\end{gather}
The following term utilizes the stability result from Lemma \ref{lemma: Stability} together with the Cauchy-Schwarz inequality 
and Assumption \ref{as: Rom Error Assumption}
\al
C\nu^{-1}\Delta t \sum_{n=0}^{M-1} ||  \ur^{n+1}|| ||\nabla   \ur^{n+1}|| ||\nabla \bfeta^{n+1}||^2
\leq C C_{s,r}\nu^{-1} \pare{\delt \sum_{n=0}^{M-1}\norm{\Grad\bfeta^{n+1}}^4 }^{1/2}  \nonumber\\
\leq C C_{s,r}\nu^{-1}  \pare{ N^{-2k} + \norm{S_r}_2 N^{-2k-2}
+  (1 + \norm{S_r}_2 )\delt^6 + \Lh10}. \label{eq: 2stabeta}
\eal
Using Lemma \ref{lemma: Rom_Filter_Error}, we have the following bound: 
\al
C  \chi^2 \nu\inv \delt \sum_{n=0}^{M-1} \norm{(I-G_r)\bfu^{n+1}}^2     \leq C \chi^2 \nu\inv  \pare{  N^{-2k-2} + \delt^6 + \Ll2} + C \nu\inv \chi^2 \delta^4 \nonumber \\
+ C \chi^2 \delta^2 \nu\inv  \pare{ N^{-2k} + \norm{S_r}_2 N^{-2k-2} +  (1 + \norm{S_r}_2 )\delt^6 + \Lh10}.
\eal
Thus, using the above bounds, (\ref{error_3}) becomes 
\begin{align}
& || \phir^{M}||^2 +
\nu \Delta t \sum_{n=0}^{M-1} ||\nabla   \phir^{n+1}||^2
\textcolor{black}{+ 2 \chi \delt \sum_{n=0}^{M-1} \norm{\phir^{{n+1}}}_*^2} \nonumber \\
\leq&  C \Delta t \sum_{n=0}^{M-1}\nu^{-3}||\nabla \bfu^{n+1}||^4 ||  \phir^{n+1}||^2   +C \nu\inv (N^{-2s-2} + \delt^2 +  \chi^2 \delta^4  ) \nonumber \\
& + C \nu\inv \chi^2  \pare{N^{-2k-2} + \delt^6 + \Ll2} 
+ C \nu\inv \pare{ \mathcal{A}
+   \sqrt{\Ll2} \sqrt{\Lh10}}\nonumber \\
&+ C \pare{\nu + \frac{\chi^2\delta^2+C_{s,r}}{\nu}  }
\pare{ N^{-2k} + \norm{S_r}_2 N^{-2k-2} +  (1 + \norm{S_r}_2 )\delt^6 + \Lh10}. \qquad \label{error_4}
\end{align}
Hence, by the Gronwall inequality from Lemma \ref{discreteGronwall} with $\Delta t$ sufficiently small, i.e., 
$d_n \Delta t := C\nu^{-3}||\nabla \bfu^n||^4 \Delta t < 1$, we obtain the following result: 
\begin{align}
&  ||\bfphi^{M}||^2 
+\nu \Delta t \sum_{n=0}^{M-1} ||\nabla   \phir^{n+1}||^2 + 2 \chi \delt \sum_{n=0}^{M-1} \norm{\phir^{{n+1}}}_*^2 \nonumber \\
\leq& C \exp{\left( \Delta t \sum_{n = 0}^{
M-1} \frac{d_{n}}{(1 - \Delta t \, d_{n})} \right)} \Bigg( \nu\inv (N^{-2s-2} + \delt^2 +  \chi^2 \delta^4 )  \nonumber \\
&+  \nu\inv \chi^2 \pare{N^{-2k-2} + \delt^6 + \Ll2} +  \nu\inv \pare{ \mathcal{A}
+  \sqrt{\Ll2} \sqrt{\Lh10 }}\nonumber \\
& \ + \pare{\nu +\frac{\chi^2\delta^2+C_{s,r}}{\nu} } \bigg(  N^{-2k} + \norm{S_r}_2 N^{-2k-2} +  (1 + \norm{S_r}_2 ) \delt^6 + \Lh10 \bigg) \Bigg).  \label{error_5}
\end{align}
The triangle inequality finishes the proof. 
\end{proof}

\subsection{Parameter Scalings}
\label{ssec: Parameter Scalings}


In this subsection, we 
build upon the error bound proved in Theorem~\ref{thm:ErrorEstimate} to derive parameter scalings for the time relaxation constant, $ \chi$.
To discover the optimal choice of parameter $ \chi$, we 
extend the strategy 
used in \cite{giere2015supg} to the ROM setting. 
To this end, we consider the error bound given by the result of Theorem \ref{thm:ErrorEstimate}: 
\al
&  ||| \bfe|||^2_{\infty,0}+ \nu|||\nabla  \bfe^{n+1}|||^2_{2,0} + 2\chi \delt \sum_{n=0}^{M-1} \norm{ \bfe^{n+1}}_*^2 
\leq CK \Bigg( \nu\inv (N^{-2s-2} + \delt^2 +  \chi^2 \delta^4 ) \nonumber\\
& + \nu\inv \chi^2  \pare{N^{-2k-2} + \delt^6 + \Ll2} + \nu\inv \pare{ \mathcal{A} 
+ \sqrt{ \Ll2} \sqrt{\Lh10}}\nonumber \\
&+  \pare{\nu + \frac{\chi^2\delta^2 + C_{s,r}}{\nu} } \bigg(  N^{-2k} + \norm{S_r}_2 N^{-2k-2} +  (1 + \norm{S_r}_2 ) \delt^6 + \Lh10 \bigg) \Bigg). \label{eq: stickynote 11}
\eal
First, we note that following the classical approach (see, e.g., \cite{giere2015supg}) and attempting to minimize the whole left hand side of~\eqref{eq: stickynote 11} would result in only the trivial solution $ \chi=0$. 
Choosing $\chi = 0$, however, would result in removing the time-relaxation term, which would yield the standard G-ROM.
This would clearly {be} an impractical choice since G-ROM is notoriously inaccurate in the under-resolved regime.
Thus, we propose a different strategy and minimize only the time-relaxation term (i.e., the third term) on the LHS of~\eqref{eq: stickynote 11}.
Our choice is further motivated by~\cite{ervin2012numerical}, where it is stated that $ \norm{\bfu}_*$ measures the high frequency components of $\bfu$, which is where spurious oscillations concentrate in the under-resolved regime. 
To minimize the time-relaxation term, we drop the other two terms on the LHS of~\eqref{eq: stickynote 11}, and divide by $\chi $.
To simplify the notation of the RHS of~\eqref{eq: stickynote 11},  we define a function $F$ as follows: 
\al
F(\chi) := & \pare{\nu\inv  \pare{\delt^2 + N^{-2s-2} + \mathcal{A} + \sqrt{ \Ll2 \Lh10 } } + (\nu+\nu\inv C_{s,r}) \mathcal{H}  }\chi\inv \nonumber \\
&+ \nu\inv \pare{ \delta^4 + \mathcal{L} + \delta^2 \mathcal{H} }\chi ,  \label{eq: stickynote 44}
\eal
where 
\als
\mathcal{L} := N^{-2k-2} + \delt^6 + \Ll2, \quad
\mathcal{H} := N^{-2k} + \norm{S_r}_2 N^{-2k-2} +(1+\norm{S_r}_2)\delt^6 + \Lh10.
\eals
Taking the derivative of $F$ with respect to $ \chi$ in \eqref{eq: stickynote 44} yields  
\al
F'(\chi) 
= & -\pare{\nu\inv  \pare{\delt^2 + N^{-2s-2} + \mathcal{A} + \sqrt{ \Ll2\Lh10 } } + (\nu+\nu\inv C_{s,r}) \mathcal{H}  }\chi^{-2} \nonumber \\
&+ \nu\inv \pare{ \delta^4 + \mathcal{L} + \delta^2 \mathcal{H} }.  \label{eq: DDx SN 44}
\eal
Since $\chi >0 $, setting $F' = 0$ in~\eqref{eq: DDx SN 44} results in 
\al
\nu\inv \pare{ \Ll2 + \delta^2   \Lh10+ \delta^4} \chi^2 = \pare{\nu\inv  \pare{\delt^2  +   \sqrt{ \Ll2} \sqrt{\Lh10} } + (\nu+\nu\inv C_{s,r}) \Lh10  }. \label{eq: stickynote 33}
\eal
Solving for $ \chi$ in \eqref{eq: stickynote 33} gives the optimal parameter scaling for $\chi$:
\al
\chi = \sqrt{ \frac{\nu\inv  \pare{\delt^2 + N^{-2s-2} + \mathcal{A} + \sqrt{ \Ll2\Lh10 } } + (\nu+\nu\inv C_{s,r} ) \mathcal{H} }{ \nu\inv \pare{ \delta^4 + \mathcal{L} + \delta^2 \mathcal{H} } } }.
\label{eq:optimal_chi_gen}
\eal
\begin{remark}
    The $ \chi$ scaling in \eqref{eq:optimal_chi_gen} is dimensionless 
    since the NSE~\eqref{eq:Strong_NSE1} and the numerical analysis results (including the 
    error bound \eqref{eq: stickynote 11}) are dimensionless.
\end{remark}

\section{Numerical Results}
\label{sec: Num_Sims}

In this section, we 
perform a numerical investigation of the theoretical results
obtained in Section \ref{sec: main}. To this end, we investigate whether the
TR-ROM 
{\it a priori} error bound in Theorem \ref{thm:ErrorEstimate} is
recovered numerically.
In addition, 
for the theoretical 
parameter scaling of the time-relaxation constant
$\chi$ (\ref{eq:optimal_chi_gen}) derived in Section \ref{ssec: Parameter
Scalings}, we investigate if the time-relaxation parameter {scalings} with respect
to the filter radius, $\delta$, 
{and ROM dimension, $r$, are}
recovered numerically. 
The numerical investigation 
is performed for two test problems:
the 2D flow past a circular cylinder at Reynolds number $Re=100$ (Section~\ref{subsec: 2dfpc}), and
the 2D lid-driven cavity at Reynolds number $Re = 15,000$ (Section~\ref{subsec: 2dldc}).

The numerical investigation in this section focuses on the TR-ROM 
{\it a priori} error bound 
(\ref{eq:TRROM_Error}) in Theorem \ref{thm:ErrorEstimate}, which depends on the parameters $N$, $\delt$, $\chi$, and $\delta$, as well as the ROM truncation errors $\Ll2 = \sum\nolimits^R_{j=r+1}\lambda_j$ and $\Lh10 = \sum\nolimits^R_{j=r+1} \|\nabla \phi_j\|^2 \lambda_j$ defined in (\ref{eq: POD_Proj_err}) and (\ref{eq: H10_POD_Proj_err}). 
In our numerical investigation, to measure the TR-ROM accuracy, we use the mean squared 
errors defined below:
\begin{align}
    \epl2 = \frac{1}{M+1}\sum^M_{k=0}\| {P}_{R}\bu^k_N-\bu^k_r\|^2
    ,\quad 
    \eph10 = \frac{1}{M+1}\sum^M_{k=0}\|\nabla({P}_{R}\bu^k_N-\bu^k_r)\|^2,
    \label{eq:error_def}
\end{align}
where $\bu^k_r$ is the TR-ROM 
approximation, and $P_R$ is the ROM $L^2 $ projection 
(Definition \ref{def: Rom_Proj}) 
onto the $R$-dimensional reduced space.  Specifically, we numerically investigate the rates of the convergence of $\epl2$ and $\eph10$ with respect to the ROM truncation errors $\Ll2$ and $\Lh10$, respectively.

\begin{remark}
    The TR-ROM errors in (\ref{eq:error_def}) are computed with respect to the projected FOM solution ${P}_{R}\bu^k_N$ because (i) the considered model problems do not have exact solutions, and (ii) the cost for computing the errors with respect to the projected FOM solution is independent of the number of FOM degrees of freedom, $\cN$. Measuring the error with respect to the FOM solution would require a post-processing step. The $R$ value is selected so that the error between $\bu^k_N$ and ${P}_R\bu^k_N$ is small.
\end{remark}

\subsection{TR-ROM 
Computational Implementation}

The fully discrete formulation TR-ROM (\ref{eq: ROM_TRM}) is equivalent to the following system:
\al
\frac{1}{\delt}(\ur^{n+1}- \ur^n,\vr) & + b^*( \ur^{n+1}, \ur^{n+1},\vr) + \nu(\nabla  \ur^{n+1},\nabla \vr)  \nonumber
 \\ & + \chi \pare{\ur^{n+1} - \baru^{n+1}_r ,\vr }
   =  (\bff (t^{n+1}),\vr ) \label{eq: ROM_TRM_1} \\
 \baru^{n+1}_r & = G_r(\ur^{n+1}).
 \label{eq: ROM_TRM_2}
\eal
Equations~\eqref{eq: ROM_TRM_1}--\eqref{eq: ROM_TRM_2} are equivalent to the following algebraic system:
\al
\frac{1}{\delt} M\uur^{n+1} & + (\uur^{n+1})^T B \uur^{n+1} + \nu A \uur^{n+1}  + \chi M (\uur^{n+1} - \buur^{n+1})  =  \uf^{n+1} 
\label{TR-ROM-algebraic-system-1} \\ 
\buur^{n+1} & = (M + \delta^2 A)^{-1}M\uur^{n+1},
\label{TR-ROM-algebraic-system-2} 
\eal
where $M_{ij} = (\bphi_i, \bphi_j)$, $A_{ij} = (\bphi_i, \bphi_j)$, and $B_{ijk} =  b^*(\bphi_j,\bphi_k,\bphi_i)$ are the reduced mass, stiffness, and advection operators. 
$\uf$ is the forcing vector projected onto the reduced space, and $\delta$ is the filter radius. The algebraic system \eqref{TR-ROM-algebraic-system-1}--\eqref{TR-ROM-algebraic-system-2} can be further expressed in 
a matrix-vector form:
\al
\begin{bmatrix}
    L & -\chi M \\ \huge0 & I
\end{bmatrix} 
\begin{bmatrix}
   \uur^{n+1}  \\ \buur^{n+1}
\end{bmatrix}
+ 
\begin{bmatrix}
   \uur^{n+1}  \\ \buur^{n+1}
\end{bmatrix}^T   
\begin{bmatrix}
    B & \huge0 \\ \huge0 & \huge0    
\end{bmatrix}
\begin{bmatrix}
   \uur^{n+1}  \\ \buur^{n+1}
\end{bmatrix} = 
\begin{bmatrix}
    \uf^{n+1} \\ (M + \delta^2 A)^{-1} M \uur^{n+1}   
\end{bmatrix},
\label{TR-ROM-linear-system}
\eal
where $L$ is a linear operator defined as
$L = \frac{1}{\delt} M + \nu A + \chi M.$
The 
nonlinear system \eqref{TR-ROM-linear-system} is of size $2r \times 2r$. However, using the relation,
$\buur^{n+1} = (M+\delta^2 A)^{-1} M\uur^{n+1},$
the 
nonlinear system \eqref{TR-ROM-linear-system} can be simplified to the following $r \times r$ nonlinear system: 
\al
L \uur^{n+1} - \chi M (M + \delta^2 A)^{-1} M \uur^{n+1} +  (\uur^{n+1})^T B \uur^{n+1} = \uf^{n+1},\label{eq:TR-ROM}
\eal
where the low-dimensional ($r \times r$) matrix $(M+\delta^2 A)^{-1}$ can be precomputed. 
We note that here we assumed that the zeroth mode, $\bphi_0$, is a zero velocity field, 
but the conclusion of simplifying the $2r \times 2r$ nonlinear system to size of $r \times r$ 
still holds if one has a nontrivial zeroth mode. 

We use the open-source code NekROM \cite{NekROM} to construct and solve the TR-ROM defined in (\ref{eq:TR-ROM}) for the two test problems described below. 
We mention that, in the current TR-ROM implementation, the convection term is not in the skew-symmetric form $b^*$.
We note, however, that using the standard trilinear form $b$ does not have a significant impact on the code's numerical stability.

\subsection{2D Flow Past a Cylinder}
\label{subsec: 2dfpc}


Our first 
test problem is the 2D flow past a cylinder at 
Reynolds number $\rm Re=100$, which is a canonical test case for ROMs. 
The computational domain is $\Omega = [-2.5 \, D,17 \, D] \times [-5 \, D,5 \, D]$, where $D$ is the cylinder diameter, and 
the 
cylinder is centered at $[0,0]$.

The reduced basis functions $\{\bphi_i\}^r_{i=1}$ are constructed by applying the POD procedure to 
$K=2001$ snapshots $\{\bu^k := \bu(\bx,t^k)-\bphi_0\}^K_{k=1}$. The snapshots are  
collected in the time interval $[500,~520]$ (measured in convective time units, $D/U$, where $U$ is the free-stream velocity),
after the von Karman vortex street  is developed, with sampling time $\Delta t_s=0.01$. The zeroth mode, $\bphi_0$, is set to be the FOM velocity field at $t=500$. TR-ROM is simulated on the same time interval where the snapshots are collected. 
Thus, we are in the reproduction regime.
As the initial condition for the TR-ROM, we choose the zero vector.

\subsubsection{Rates of Convergence}
We first investigate the rates of 
convergence of $\epl2$ and $\eph10$ with respect to $\Ll2$ and $\Lh10$ in the reproduction regime. 
To this end, we fix 
$N=12,~s=0,~k=1,~\delt = 2\times 10^{-3},~\delta=0.04,~\chi=0.2$, 
and vary $r$. 
Table~\ref{table:2dcyl_mag_term} shows the magnitude of each term on the right-hand side of the theoretical error estimate \eqref{eq:TRROM_Error} with these 
parameter values. 
Thus, the theoretical error estimate \eqref{eq:TRROM_Error} yields the following rates of convergence:
\begin{equation}
    \epl2  \sim \mO(\Ll2), 
    \quad
    \eph10  \sim \mO(\Lh10). \label{eq:h10_scale}
    \vspace{-0.75cm}
\end{equation}
\begin{table}[!ht]
\caption{Magnitude of each term on the right-hand side of the theoretical error estimate \eqref{eq:TRROM_Error} with respect to the number of modes, $r$. Here $N=12$, $\delt = 2 \times 10^{-3}$, $\delta=0.04$, and $\chi=0.2$. With these 
$r$ values, the matrix $2$-norm of the POD stiffness matrix $\|S_r\|_2=\mO(1)~ \text{--}~\mO(10^2)$.}
\label{table:2dcyl_mag_term}
\centering
\resizebox{\columnwidth}{!}{%
\begin{tabular}{|c|c|c|c|c|c|c|c|c|c|c||}
\hline
$r$ & $N^{-2s-2}$ & $\Delta t^{2}$ & $\chi^2\delta^4$ & $\chi^2 N^{-2k-2}$ & $\chi^2\Ll2$ & $\sqrt{\Ll2\Lh10}$ & $N^{-2k}$ & $\|S_r\|_2 N^{-2k-2}$  & \cellcolor{red!20}$\Lh10$ \\
\hline
2 &6.94e-03 &4.00e-06 &1.02e-07 &1.93e-06 &2.47e+01 &6.04e+03 &6.94e-03 &1.47e-04 &\cellcolor{red!20}5.91e+04 \\
\hline
5 &6.94e-03 &4.00e-06 &1.02e-07 &1.93e-06 &4.81e+00 &2.63e+03 &6.94e-03 &8.47e-04 &\cellcolor{red!20}5.73e+04 \\
\hline
10 &6.94e-03 &4.00e-06 &1.02e-07 &1.93e-06 &3.05e-02 &1.43e+01 &6.94e-03 &2.30e-03 &\cellcolor{red!20}2.69e+02 \\
\hline
20 &6.94e-03 &4.00e-06 &1.02e-07 &1.93e-06 &4.19e-05 &1.35e-02 &6.94e-03 &9.34e-03 &\cellcolor{red!20}1.73e-01 \\
\hline
\end{tabular}
}
\end{table}
\newpage

The behavior of the TR-ROM approximation errors $\epl2$ and $\eph10$ with respect to $\Ll2$
and $\Lh10$, respectively,
is shown in Figs.~\ref{fig:2dcyl_twrom_l2_scaling}--\ref{fig:2dcyl_twrom_h10_scaling}. In these plots, the ranges of values for $\Ll2$ and $\Lh10$ correspond
to the chosen range of values for $r$, i.e., from $r=2$ to $r=8$. 
The linear regression in the figures 
yields the following TR-ROM approximation error rates of convergence with respect to $\Ll2$ and $\Lh10$:
\begin{align}
    \epl2 \sim \mO((\Ll2)^{0.9316}), \quad
    \eph10 \sim \mO((\Lh10)^{0.9753}). 
\end{align}
Thus, the theoretical rates of convergence 
(\ref{eq:h10_scale}) 
are numerically recovered. 

\begin{figure}[!ht]
    \centering
    \begin{subfigure}{0.49\columnwidth}
    \caption{}
    \label{fig:2dcyl_twrom_l2_scaling}
    \includegraphics[width=1\textwidth]{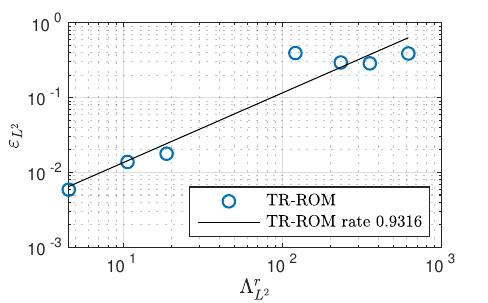}
    \end{subfigure}
    \begin{subfigure}{0.49\columnwidth}
    \caption{}
    \label{fig:2dcyl_twrom_h10_scaling}
    \includegraphics[width=1\textwidth]{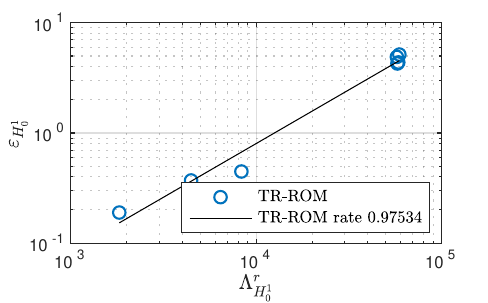}
    \end{subfigure}
    \caption{2D flow past a cylinder at $\rm Re=100$, TR-ROM with $\chi=0.2$ and $\delta=0.04$. (a) Behavior of the mean squared $L^2$ error  
    $\epl2$ with respect to $\Ll2$, and (b) behavior of the mean squared $H^1_0$ error $\eph10$ with respect to $\Lh10$. 
    } 
    \label{fig:2dcyl_err_conv_rate}
\end{figure}

\subsubsection{Scaling of $\chi$ with respect to $\delta$}

In Section \ref{ssec: Parameter Scalings}, a theoretical formulation for the
time-relaxation constant $\chi$ (\ref{eq:optimal_chi_gen}) is derived. With
$N=12,~s=0,~k=1$, and $\delt = 2\times 10^{-3}$, the 
terms 
$\delt^2,~N^{-2s-2},~N^{-2k},~\|S_r\|_2N^{-2k-2},~(1+\|S_r\|_2)\delt^6,~\text{and}~\mathcal{A}$
are relatively small. Hence, (\ref{eq:optimal_chi_gen}) can be further simplified as follows: 
\al
\chi =\sqrt{\frac{\pare{\sqrt{\Ll2\Lh10} + C_{s,r} \Lh10}}{\pare{\Ll2 + \delta^2   \Lh10 + \delta^4}}}\label{eq:opt_chi}.
\eal
Given an $r$ value, $C_{s,r}$, $\Ll2$, $\sqrtLl2h10$, and $\Lh10$ are fixed. Hence, (\ref{eq:opt_chi}) indicates that the theoretical $\chi$, $\thechi$, 
scales like 
either $\delta^{-1}$ or $\delta^{-2}$, depending on the $\delta$ value. That is, there exist two $\delta$ values, $\delta_1$ and $\delta_2$, such that
\begin{align}
   \thechi \sim \mO(1) \quad & \forall~ \delta < \delta_1, \\
   \thechi \sim \mO(\delta^{-1}) \quad & \forall~ \delta_1 \le \delta  < \delta_2, \\
   \thechi \sim \mO(\delta^{-2}) \quad & \forall~ \delta_2 \le \delta.
\end{align}
We investigate whether the scaling of the effective $\chi$, $\efchi$, with
respect to the filter radius, $\delta$, follows the scaling indicated by
(\ref{eq:opt_chi}). 

In Fig.~\ref{fig:2dcyl_chi_delta_scaling}, the behavior of 
$\thechi$ in (\ref{eq:opt_chi}) and $\efchi$ with respect to the filter radius, 
$\delta$, is shown for $r=2~\text{and}~3$. $\efchi$ is found by solving the TR-ROM and is defined to be the largest $\chi$ value that yields an accuracy that is similar to (i.e., within $5\%$ of) the accuracy for the optimal $\chi$, which is defined to be the $\chi$ value that yields the smallest $\eph10$. Specifically, for each $r$ value, we consider $18$ $\delta$ values from $[0.02, 1]$. For each $\delta$ value, $35$ $\chi$ values from $[0.001, 5]$ are considered, and $\efchi$ is selected from the $35$ $\chi$ values. 
\begin{figure}[!ht]
    \centering
    \begin{subfigure}{0.49\columnwidth}
     \includegraphics[width=1\textwidth]{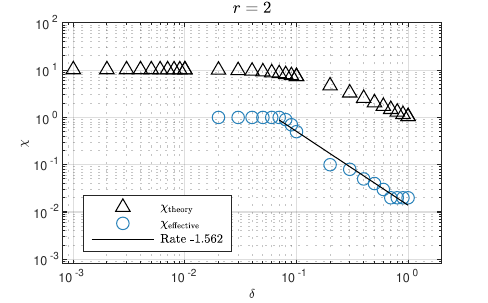}
    \end{subfigure}
    \begin{subfigure}{0.49\columnwidth}
    \includegraphics[width=1\textwidth]{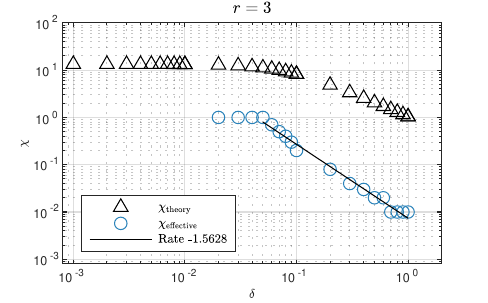}
    \end{subfigure}
    \caption{2D flow past a cylinder at $\rm Re=100$. Behavior of $\thechi$ (\ref{eq:opt_chi}) and $\efchi$ with respect to the filter radius $\delta$ for $r=2~\text{and}~3$. 
    }
    \label{fig:2dcyl_chi_delta_scaling}
\end{figure}

The results show that, for both $r$ values,  $\efchi$ scales like a constant for $\delta \le \delta_1$, 
where $\delta_1$ varies with the $r$ value. The linear regression in the figure indicates that $\efchi$ scale like $\delta^{-1.56}$ for $\delta > \delta_1$.





Next, we use two $\delta$ values to estimate the 
ratio between the effective $\chi$ and the theoretical $\chi$ (\ref{eq:opt_chi}), 
and demonstrate that 
this ratio can be used with $\thechi$ to \textit{predict} 
$\efchi$ at other $\delta$ values. 
To this end, we compute the ratio between $\thechi$ and $\efchi$ at $\delta=0.2$ and $0.3$, and 
take the average of the two ratios. The extrapolated $\chi$ values at $\delta=0.4,0.5,0.6,0.7$ are then computed using $\thechi$ and the 
average of the two ratios calculated above. 
From the results shown in Fig.~\ref{fig:2dcyl_chi_delta_scaling_extrapolated}, the extrapolated $\chi$ is close to the effective $\chi$,
{which illustrates the predictive capabilities of the theoretical parameter scaling for $\chi$ in (\ref{eq:opt_chi}).}
\begin{figure}[!ht]
    \centering
    \begin{subfigure}{0.49\columnwidth}
     \includegraphics[width=1\textwidth]{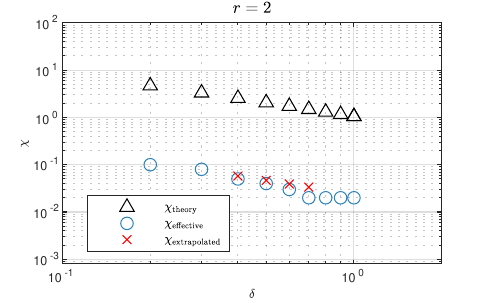}
    \end{subfigure}
    \begin{subfigure}{0.49\columnwidth}
    \includegraphics[width=1\textwidth]{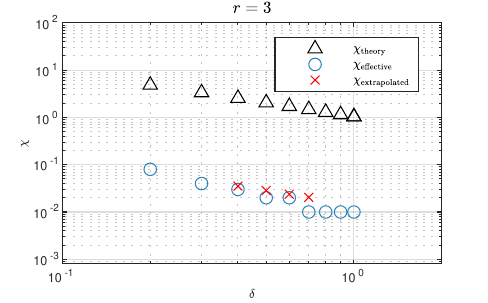}
    \end{subfigure}
    \caption{2D flow past a cylinder at $\rm Re=100$. Behavior of the extrapolated $\chi$ with respect to the filter radius, $\delta$, for $r=2~\text{and}~3$. Extrapolation is done by using the two values of $\thechi$ (\ref{eq:opt_chi}) and $\efchi$ at $\delta=0.2$ and $\delta=0.3$.
    }\label{fig:2dcyl_chi_delta_scaling_extrapolated}
\end{figure}

\subsection{2D Lid-Driven Cavity}
\label{subsec: 2dldc}
Our next example is the 2D lid-driven cavity (LDC) problem at $\rm Re=15,000$, which is a more challenging model problem compared to the 2D flow past a cylinder. As demonstrated in \cite{fick2018stabilized}, the problem requires more than $60$ POD modes for G-ROM to accurately reconstruct the solutions and QOIs. A detailed description 
of the FOM setup for this problem can be found in \cite{kaneko2020towards}. 

The reduced basis functions $\{\bphi_i\}^r_{i=1}$ are constructed by applying POD
to $K=2001$ statistically steady state snapshots $\{\bu^k :=
\bu(\bx,t^k)-\bphi_0\}^K_{k=1}$. The snapshots are collected in the time
interval 
$[6000,~ 6040]$ with sampling time $\Delta t_s=0.02$. The zeroth
mode, $\bphi_0$, is set to the FOM velocity field at $t=6000$. 

\subsubsection{Rates of Convergence}
We first investigate the rates of 
convergence of $\epl2$ and $\eph10$ with respect to $\Ll2$ and $\Lh10$, {respectively}. 
To this end, we fix 
$N=8,~s=0,~k=1,~\delt = 10^{-3},~\delta=0.004,~\chi=0.2$, 
and vary $r$. 
Table~\ref{table:2dldc_under_mag_term} shows the magnitude of each term on the right-hand side 
of the theoretical error estimate \eqref{eq:TRROM_Error} with these choices of parameters.
\begin{table}[!ht]
\caption{Magnitude of each term on the right-hand side of the theoretical error estimate \eqref{eq:TRROM_Error} with respect to the number of modes, $r$. Here $N=8$, $\delt = 10^{-3}$, $\delta=0.06$, and $\chi=0.05$. The matrix $2$-norm of the POD stiffness matrix $\|S_r\|_2=\mO(10^3)~ \text{--}$
{$\mO(10^4)$} with the considered $r$ values.}
\label{table:2dldc_under_mag_term}
\centering
\resizebox{\columnwidth}{!}{%
\begin{tabular}{|c|c|c|c|c|c|c|c|c|c|c||}
\hline
$r$ & $N^{-2s-2}$ & $\Delta t^{2}$ & $\chi^2\delta^4$ & $\chi^2 N^{-2k-2}$ & $\chi^2\Ll2$ & $\sqrt{\Ll2\Lh10}$ & $N^{-2k}$ & $\|S_r\|_2 N^{-2k-2}$  & \cellcolor{red!20}$\Lh10$ \\
\hline
2 &1.56e-02 &1.00e-06 &3.24e-08 &6.10e-07 &4.30e-03 &2.59e+02 &1.56e-02 &2.86e-01 &\cellcolor{red!20}3.92e+04 \\
\hline
4 &1.56e-02 &1.00e-06 &3.24e-08 &6.10e-07 &2.26e-03 &1.35e+02 &1.56e-02 &3.39e-01 &\cellcolor{red!20}2.01e+04 \\
\hline
8 &1.56e-02 &1.00e-06 &3.24e-08 &6.10e-07 &1.30e-03 &7.72e+01 &1.56e-02 &3.77e-01 &\cellcolor{red!20}1.14e+04 \\
\hline
16 &1.56e-02 &1.00e-06 &3.24e-08 &6.10e-07 &5.63e-04 &3.14e+01 &1.56e-02 &4.42e-01 &\cellcolor{red!20}4.38e+03 \\
\hline
\end{tabular}
}
\end{table}
Thus, the theoretical error estimate 
{\eqref{eq: stickynote 11}} yields the rates of convergence in \eqref{eq:h10_scale}.

\begin{figure}[!ht]
    \centering
    \begin{subfigure}{0.49\columnwidth}
    \caption{}
    \label{fig:2dldc_twrom_l2_scaling_under}
    \includegraphics[width=1\textwidth]{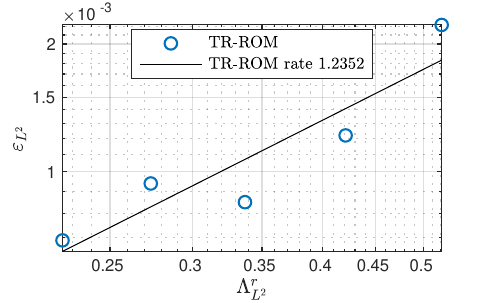}
    \end{subfigure}
    \begin{subfigure}{0.49\columnwidth}
    \caption{}
    \label{fig:2dldc_twrom_h10_scaling_under}
    \includegraphics[width=1\textwidth]{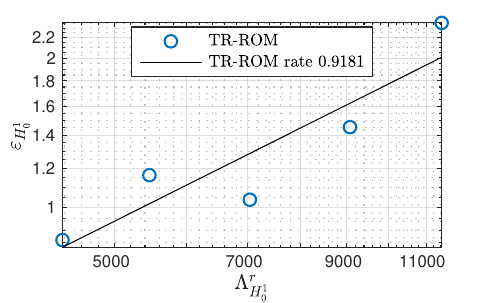}
    \end{subfigure}
    \caption{2D lid-driven cavity at $\rm Re=15,000$, TR-ROM with $\chi=0.05$ and $\delta=0.06$. 
    (a) Behavior of 
    the mean squared $L^2$ error $\epl2$ with respect to $\Ll2$, and (b) behavior of mean squared $H^1_0$ error $\eph10$ with respect to $\Lh10$. The ranges of $\Ll2$ and $\Lh10$ for the TR-ROM correspond to $r=8$ to $r=16$.} 
    \label{fig:2dldc_twrom_scaling_under}
\end{figure}

The behavior of the TR-ROM approximation errors $\epl2$ with respect to $\Ll2$ and $\eph10$ with respect to $\Lh10$, {respectively,} are shown in Figs.~\ref{fig:2dldc_twrom_l2_scaling_under}--\ref{fig:2dldc_twrom_h10_scaling_under}. The ranges of $\Ll2$ and $\Lh10$ correspond to $r=8$ to $r=16$. The linear regression in the figures indicates the following TR-ROM approximation error rates of convergence with respect to $\Ll2$ and $\Lh10$:
\begin{align}
    \epl2 \sim \mO((\Ll2)^{1.2352}), \quad
    \eph10 \sim \mO((\Lh10)^{0.9181}). 
\end{align}
Thus, the theoretical rates of convergence 
(\ref{eq:h10_scale}) 
are numerically recovered.

\subsubsection{Scaling of $\chi$ with respect to $\delta$}
With $N=8,~s=0,~k=1$ and $\delt = 10^{-3}$, the 
terms 
$\delt^2$, $N^{-2s-2}$, $N^{-2k}$, $\|S_r\|_2N^{-2k-2}$, $(1+\|S_r\|_2)\delt^6$, and $\mathcal{A}$
are {relatively} small. Therefore, (\ref{eq:opt_chi}) holds. Next, we investigate whether the scaling of 
$\efchi$ with respect to the 
filter radius, $\delta$, follows the scaling 
in (\ref{eq:opt_chi}) for $r=4,~8,~12,~\text{and}~16$.

In Fig.~\ref{fig:2dldc_chi_delta_scaling}, the behavior of 
$\thechi$ in (\ref{eq:opt_chi}) and $\efchi$ with respect to the filter
radius, $\delta$, 
is shown for 
$r=4~\text{and}~16$. The results for $r=8~\text{and}~12$ are similar.
$\efchi$ is found by solving the TR-ROM {for different parameter values. Specifically,} 
for each $r$ value, we consider $28$ $\delta$ values from $[0.001, 1]$. For each $\delta$ value, $30$ $\chi$ values from $[0.001, 1]$ are considered, and
$\efchi$ is selected from the $30$ $\chi$ values.

\begin{figure}[!ht]
    \centering
    \begin{subfigure}{0.49\columnwidth}
    \includegraphics[width=1\textwidth]{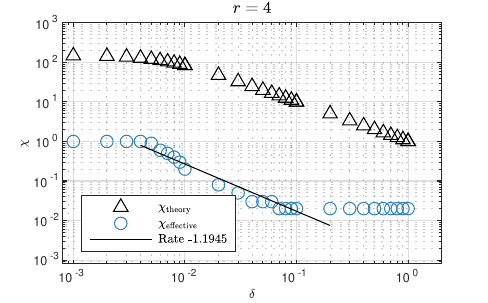}
    \end{subfigure}
    \begin{subfigure}{0.49\columnwidth}
    \includegraphics[width=1\textwidth]{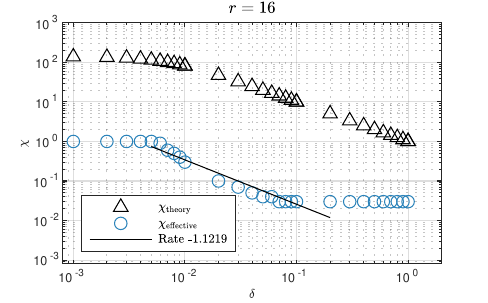}
    \end{subfigure}
    \caption{2D lid-driven cavity at $\rm Re=15,000$.
    Behavior of $\thechi$ (\ref{eq:opt_chi}) and $\efchi$  with respect to filter radius, $\delta$, for $r=4~\text{and}~16$.
    }
    \label{fig:2dldc_chi_delta_scaling}
\end{figure}

Similar to the 2D flow past a cylinder problem, the results show that 
$\efchi$ scales like a constant for certain $\delta \le \delta_1$ for all considered $r$ values, where $\delta_1$ varies with the $r$ value. The linear regression in the figure indicates that, for certain $\delta \ge \delta_1$, 
$\efchi$ scales like $\delta^{-1.2}$ for $r=4$, 
and scales like $\delta^{-1.1}$ for $r=16$. Similar scalings are observed in $r=8$ and $r=12$ cases. 
In addition, for $\delta \ge 0.07$, the filtering is too aggressive such that $G_r\bu_r \approx \mathbf{0}$. Therefore, 
the relaxation term $(I-G_r)\bu_r$ is dominated by the unfiltered solution, $\bu_r$. Hence, we see that $\efchi$ 
{does not change} for $\delta \ge 0.07$.
\begin{figure}[!ht]
    \centering
    \begin{subfigure}{0.49\columnwidth}
    \includegraphics[width=1\textwidth]{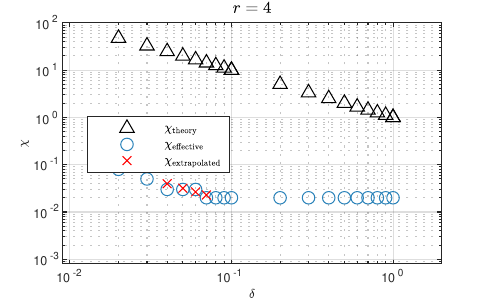}
    \end{subfigure}
    \begin{subfigure}{0.49\columnwidth}
    \includegraphics[width=1\textwidth]{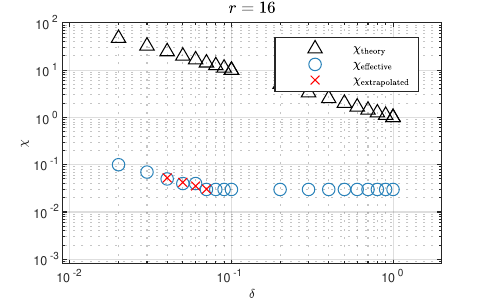}
    \end{subfigure}
    \caption{2D lid-driven cavity at $\rm Re=15,000$. 
    Behavior of the extrapolated $\chi$ with respect to the filter radius, $\delta$, for $r=4~\text{and}~16$.
    Extrapolation is done by using the two values of $\thechi$ (\ref{eq:opt_chi}) and $\efchi$ at $\delta = 0.02$ and $\delta=0.03$.
    }
    \label{fig:2dldc_chi_delta_scaling_extrapolated}
\end{figure}

Next, we use two $\delta$ values to estimate the 
ratio between $\efchi$ and $\thechi$ (\ref{eq:opt_chi}), 
and demonstrate that 
this ratio can be used with $\thechi$ to \textit{predict} 
$\efchi$ at other $\delta$ values. 
To this end, we compute the ratio between $\thechi$ and $\efchi$ at $\delta=0.02$ and $0.03$, and 
take the average of the two ratios. The extrapolated $\chi$ values at $\delta=0.04,0.05,0.06, \text{and } 0.07$ are then computed using $\thechi$ and the 
average of the two ratios calculated above. 
From the results shown in Fig.~\ref{fig:2dldc_chi_delta_scaling_extrapolated}, the extrapolated $\chi$ is close to $\efchi$, {which highlights the predictive capabilities of the theoretical parameter scalings for $\chi$ in \eqref{eq:opt_chi}}.


\subsubsection{Scaling of $\chi$ with respect to $r$}
We also investigate the scaling of 
$\efchi$ with respect to the number of modes, $r$. 
First, we note that since $\sqrt{\Ll2\Lh10} \le \Lh10$ for $r \in [2, 16]$ 
(compare columns 7 and 10 in  Table~\ref{table:2dldc_under_mag_term}) 
and $C_{s,r} = \|\bu^0\|^2$, 
which {does not depend on $r$, }
{the $\chi$} 
in (\ref{eq:opt_chi}) can be further simplified to
\al
\chi =  
\sqrt{\frac{
\Lh10}{\pare{\Ll2 + \delta^2   \Lh10 + \delta^4}}} \label{eq:opt_chi_2}.
\eal

We consider two types of filter radius for studying the scaling of $\efchi$ with respect to $r$.
First, we consider a constant filter radius, which is independent of $r$. In this case, $\thechi$ could scale like 
$\sqrt{
\Lh10/\Ll2}$, a constant related to $\delta$,  
or $\sqrt{
\Lh10}$, depending on the $r$ value. That
is, there exist two $r$ values, $r_1$ and $r_2$, such that %
\begin{align}
   \thechi \sim \mO\left(\sqrt{\frac{\Lh10}{\Ll2}}\right) \quad & \forall~ r < r_1, \\
   \thechi \sim \mO(1) \quad & \forall~ r_1 \le r < r_2,  \\
   \thechi \sim \mO\left(\sqrt{\Lh10}\right) \quad & \forall~ r_2 \le r.
\end{align}
\begin{figure}[!ht]
    \centering
    \begin{subfigure}{0.49\columnwidth}
    \includegraphics[width=1\textwidth]{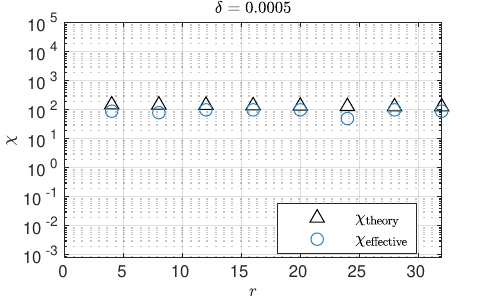}
    \end{subfigure}
    \begin{subfigure}{0.49\columnwidth}
    \includegraphics[width=1\textwidth]{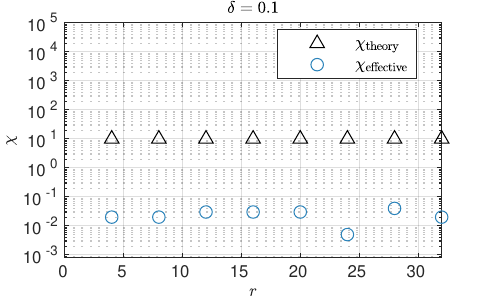}
    \end{subfigure}  
    \caption{2D lid-driven cavity at $\rm Re=15,000$ with constant filter radius. 
    Behavior of 
    $\thechi$ (\ref{eq:opt_chi_2}) and 
    $\efchi$ with respect to number of modes $r$ for $\delta=0.0005$ and $0.1$.} 
    \label{fig:2dldc_twrom_chi_vs_r_underresolved}
\end{figure}

We study the scaling of $\efchi$ with respect to $r$ for $\delta=0.0005$, $0.001$ , $0.01$, and $0.1$. In Fig.~\ref{fig:2dldc_twrom_chi_vs_r_underresolved}, the behavior of $\thechi$ 
(\ref{eq:opt_chi_2}) and $\efchi$ with respect to 
$r$ is shown for $\delta=0.0005~\text{and}~0.1$.
For $\delta=0.0005$, 
$\thechi$ scales like $\sqrt{{\Lh10}/{\Ll2}}$ for the $r$ values considered. 
Although $\sqrt{{\Lh10}/{\Ll2}}$ 
is a function of $r$, its dependency on $r$ is weak. Therefore, 
$\thechi$ behaves like a constant, as shown in the plot. 
We find 
that $\efchi$ also scales like a constant with respect to $r$. 
Specifically, 
$\efchi$ is $100$ for almost all $r$ values, except for $r=8$ and $r=24$. 
For 
$\delta = 0.1$, 
$\thechi$ scales like a constant for the $r$ values considered. 
Although $\efchi$ is not behaving like a constant, it fluctuates around 
$\chi=0.02$.
Similar behaviors are observed in $\delta=0.001$ and $\delta=0.01$ cases. 
We also note that 
$\efchi$ at $r=24$ is much smaller compared to other $\efchi$ values for all four $\delta$ values. A further investigation is required to gain a better understanding of this behavior.

The second type of filter we consider is 
the energy-based filter radius $\enedelta$ proposed in \cite{mou2023energy}: 
\begin{equation}
    \enedelta(r) = \left(\Lambda h^{2/3}+(1-\Lambda)L^{2/3}\right)^{3/2},~\text{where}~ 
    \Lambda = \sum^r_{i=1}\lambda_i/\sum^R_{i=1}\lambda_i, 
    \label{eq:energy_delta}
\end{equation}
$L$ is the characteristic length scale, and $h$ is the mesh size.
%
We emphasize that, in contrast with the constant $\delta$ case, $\enedelta(r)$ is a function of $r$. 
Substituting (\ref{eq:energy_delta}) into (\ref{eq:opt_chi_2}), we find that $\Lh10 \delta^2$ is the largest term in the denominator for $r=2$ to $r=100$. Hence, (\ref{eq:opt_chi_2}) indicates that $\thechi$ should scale like
$\delta^{-1}$ for $r \in [0, 100]$.
\begin{figure}[!ht]
    \centering
    \vspace{0.2cm}
    \includegraphics[width=0.65\textwidth]{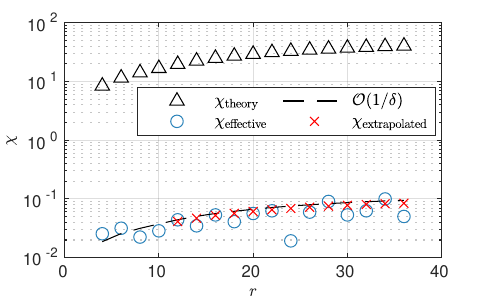}
    \caption{2D lid-driven cavity at $\rm Re=15,000$ with the energy filter radius $\enedelta$ (\ref{eq:energy_delta}).
    Behavior of 
    $\thechi$ (\ref{eq:opt_chi_2}), $\efchi$, and  extrapolated $\chi$  with respect to number of modes, $r$.
    Extrapolation 
    uses the first four values of $\thechi$ (\ref{eq:opt_chi_2}) and $\efchi$.
    }
    \label{fig:2dldc_twrom_chi_vs_r_energydelta}
\end{figure}
In Fig.~\ref{fig:2dldc_twrom_chi_vs_r_energydelta}, the behavior of 
$\thechi$ in (\ref{eq:opt_chi_2}) and $\efchi$ with respect to 
$r$ is shown. With the curve defined as $(450~\enedelta(r))^{-1}$, we clearly see that $\efchi$ also scales like $\delta^{-1}$ for the considered $r$ values, just like $\thechi$.

Next, we use four $r$ values to estimate the ratio 
between $\efchi$ and $\thechi$ (\ref{eq:opt_chi_2}), 
and demonstrate that 
this ratio can be used with $\thechi$ to \textit{predict} 
$\efchi$ at other $r$ values. 
To this end, we compute the ratio between $\thechi$ and $\efchi$ at $r=4,~6,~8$, and $10$, and 
take the average of the four ratios. The extrapolated $\chi$ {values} at $r=12,~14,\ldots,36$ are then computed using $\thechi$ and the average of the four ratio calculated above. 
From the results shown in Fig.~\ref{fig:2dldc_twrom_chi_vs_r_energydelta}, the extrapolated $\chi$ is close to $\efchi$ for all $r$ values, except 
for $r=24$.
{This highlights the predictive capabilities of the theoretical parameter scalings for $\chi$ in \eqref{eq:opt_chi_2}}.

\subsection{Parameter Scaling in the Predictive Regime}
\label{ssec:pspp}

In this section, we investigate if, in the \textit{predictive regime}, $\efchi$ still scales like $\delta^{-1}$, {given by the theoretical parameter scalings for $\chi$ in \eqref{eq:opt_chi_2}}. 
We note that 
the quantity ${\eph10} = \frac{1}{M+1} \sum^M_{k=0} \|\nabla (P_R \bu^k_N - \bu^k_r)\|^2$, which was used to determine 
$\efchi$ in the reproduction regime  (Sections~\ref{subsec: 2dfpc}--\ref{subsec: 2dldc}), is in general sensitive because it 
is based on the instantaneous error. Hence, in the predictive regime, $\efchi$  determined using $\eph10$ could be sensitive to parameters and 
deteriorate the parameter scaling.
Thus, $\eph10$ is not a suitable metric for determining $\efchi$ in the predictive regime.
Therefore, to determine $\efchi$ in both the reproduction and predictive regimes, we use an average metric, i.e., the $H^1_0$ error in the mean field,
which is defined as
\begin{equation}
\varepsilon^{avg}_{H^1_0} \equiv \|\langle \bu_r \rangle_t - \langle \bu_R \rangle_t\|^2_{H^1_0},~\text{where} \quad
\langle \bu_r \rangle_t := \sum^{{M}}_{k=0}\bu^k_r, \quad \text{and}\quad \langle \bu_R \rangle_t := \sum^{{M}}_{k=0} {P}_R\bu^k_N.
\end{equation}
The model problem is the 2D lid-driven cavity at $\rm Re=15,000$, but with a two times larger time interval, that is, $[6000, 6080]$.
We consider a larger time interval compared to the previous examples so that the quantity $\langle u_R \rangle_t$ is robust with respect to time. 

In Fig.~\ref{fig:2dldc_predictive}, the behavior of 
$\efchi$ in the predictive regime with respect to the filter radius, $\delta$, is shown for $r=4$ and $16$, along with $\thechi$ in (\ref{eq:opt_chi}) and $\efchi$ in the reproduction regime. Similar results are obtained for $r=8$ and $r=12$.
\begin{figure}[!ht]
    \centering
    \begin{subfigure}{0.49\columnwidth}
    \includegraphics[width=1\textwidth]{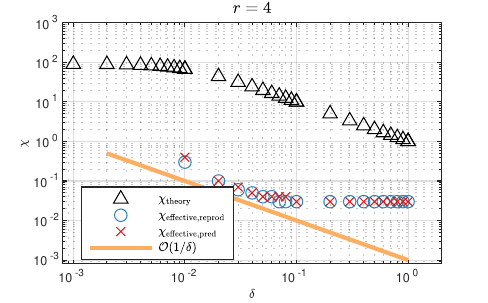}
    \end{subfigure}
    \begin{subfigure}{0.49\columnwidth}
    \includegraphics[width=1\textwidth]{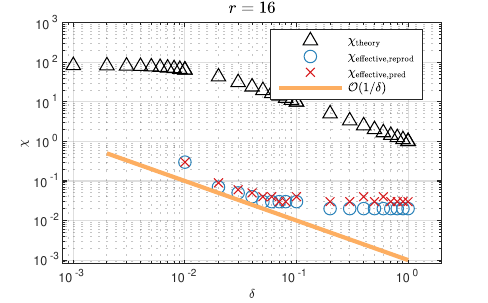}
    \end{subfigure}
    \caption{
    Behavior of $\efchi$ in predictive regime with respect to filter radius, $\delta$, for $r=4~\text{and}~16$, along with 
    $\thechi$ (\ref{eq:opt_chi}) and
    $\efchi$ in reproduction regime.
    }
    \label{fig:2dldc_predictive}
\end{figure}
To determine $\efchi$ in the reproduction regime, for each $r$ value, 
we simulate TR-ROM in the time interval $[6000, 6080]$ with $19$ $\delta$ values from $[0.01, 1]$, and $30$ $\chi$ values from $[0.001, 1]$. 
For each $\delta$ value, $\efchi$ in the reproduction regime is selected to be the largest $\chi$ value that 
yields similar accuracy (i.e., within $5\%$) as that of the optimal $\chi$, which is defined to be the $\chi$ value that yields the smallest $\epavgh10$ in the reproduction regime, 
with $M=2001$ samples. 
To determine $\efchi$ in the predictive regime, for each $r$ value, we consider the same parameter ranges for $\delta$ and $\chi$ as in the reproduction regime and simulate  
TR-ROM in a 
time interval $[6000, 6160]$, which is twice as large as the time interval in the reproduction regime. $\efchi$ in the predictive regime is selected similarly, but 
with $M=4001$ samples, where the last $2000$ samples are the data in the predictive regime.

The results show that $\efchi$ in the reproduction regime scales like $\delta^{-1}$ for the range $0.01 \le \delta \le 0.06$. 
More importantly, the results show that $\efchi$ in the predictive regime also scales like $\delta^{-1}$ in the same range as in the reproduction regime. 
In addition, for a given $\delta$ value, we observe that $\efchi$ in the predictive regime has a similar magnitude as $\efchi$ in the reproduction regime.



\section{Conclusions}
\label{sec: con}

In this work, we performed 
the first numerical analysis of the recently introduced time-relaxation reduced order model (TR-ROM) \cite{tsai2023time}. 
Specifically, we proved unconditional stability in Lemma ~\ref{lemma: Stability} and derived {\it a priori} error bounds in Theorem~\ref{thm:ErrorEstimate}. 
In addition, in Section~\ref{ssec: Parameter Scalings}, we leveraged the {\it a priori} error bounds 
to derive a formula for the time-relaxation parameter, $\thechi$, which indicates the scaling of $\chi$ with respect to the reduced space dimension $r$ and filter radius $\delta$. 
A key feature of our analysis is the coupling between the full order model (FOM) and the 
ROM, as our error bounds include terms related to the FOM discretization, which are critical for developing robust parameter scalings for the time-relaxation parameter, $\chi$. In this study, we employed the spectral element method (SEM) as the FOM, making this the first time that error bounds for SEM-based ROMs have been proven. 

In Section~\ref{sec: Num_Sims}, we demonstrated that the error convergence rate in Theorem~\ref{thm:ErrorEstimate}  and the time-relaxation parameter $\chi$ scalings with respect to $\delta$ (\ref{eq:opt_chi}) 
are recovered numerically in two test problems: the 2D flow past a cylinder and 2D lid-driven cavity. 
In addition, we estimated the ratio between the numerically found $\chi$, denoted as $\efchi$, and $\thechi$ at two $\delta$ values, and demonstrated that this ratio can be used with $\thechi$ to \textit{predict} $\efchi$ at other $\delta$ values. Furthermore, for the 2D lid-driven cavity, we demonstrated that the $\chi$ scaling with respect to $r$ (\ref{eq:opt_chi_2}) is recovered numerically for both constant filter radius and energy-based filter radius \cite{mou2023energy}.
Moreover, we showed 
that $\thechi$ can be 
also used to predict $\efchi$ at other $r$ values.
In Section~\ref{ssec:pspp}, we 
demonstrated that the $\efchi$ scaling with respect $\delta$ in the reproduction regime is also observed in the \textit{predictive} regime. In particular, we showed that $\efchi$ in the 
predictive 
regime has a similar magnitude as $\efchi$ in the reproduction regime for most $\delta$ values. This
illustrates the practical value of the 
new parameter scaling. 

For future work, there are several 
promising research directions to explore. These include 
performing numerical analysis (e.g., deriving {\it a priori} error bounds) for nonlinear filtering 
and data-driven 
extensions of the new TR-ROM 
and other regularized ROMs. 
These {\it a priori} error bounds could then be leveraged to determine new ROM parameter scalings. 
Finally, these scalings could be tested in the predictive regime of challenging numerical simulations (e.g., turbulent channel flow) to determine their range of applicability in practical settings.

\section{Acknowledgments}
This work was 
supported by the 
National Science Foundation through grants
DMS-2012253 and CDS\&E-MSS-1953113, 
and
by grant PID2022-136550NB-I00 funded by MCIN/AEI/10.13039/ 501100011033 and 
the European Union ERDF A way of making Europe. 

 \citestyle{numeric-comp}
 \bibliographystyle{elsarticle-num}
\bibliography{Ref}
\end{document}